\newcommand{\NN}{\mathbb{N}}
\newcommand{\QQ}{\mathbb{Q}}
\newcommand{\FF}{\mathbb{F}}
\newcommand{\CC}{\mathbb{C}}
\newcommand{\PP}{\mathbb{P}}
\newcommand{\Gal}{\mathop{\rm Gal}\nolimits}
\newcommand{\p}{\mathfrak{p}}
\newcommand{\OO}{\mathcal{O}}
\newcommand{\Spec}{\mathop{\rm Spec}}
\newcommand{\Aut}{\mathop{\rm Aut}\nolimits}
\newcommand{\iso}{\stackrel{\sim}{\to}}
\newcommand{\gen}[1]{\mathopen\langle#1\mathclose\rangle}
\newcommand{\ord}{{\rm ord}}
\newcommand{\X}{\mathcal X}
\newcommand{\Y}{\mathcal Y}
\newcommand{\Xb}{\bar{X}}
\newcommand{\Yb}{\bar{Y}}
\newcommand{\xb}{\bar{x}}
\newcommand{\yb}{\bar{y}}
\newcommand{\zb}{\bar{z}}
\newcommand{\et}{{\rm et}}
\renewcommand{\D}{\mathcal{D}}
\newcommand{\Db}{\bar{D}}
\newcommand{\phib}{\bar{\phi}}
\begin{document}

\title*{Picard curves with small conductor}

\author{Michel B\"orner, Irene I.~Bouw, and Stefan Wewers}
\authorrunning{B\"orner, Bouw, Wewers}
\institute{Institut f\"ur Reine Mathematik, Universit\"at Ulm\\
\email{irene.bouw@uni-ulm.de},
\email{stefan.wewers@uni-ulm.de}}

\maketitle

\abstract{We study the conductor of Picard curves over $\QQ$, which is a
  product of local factors. Our results are based on previous results on
  stable reduction of superelliptic curves that allow to compute the conductor
  exponent $f_p$ at the primes $p$ of bad reduction. A careful analysis of the
  possibilities of the stable reduction at $p$ yields restrictions on the
  conductor exponent $f_p$. We prove that Picard curves over $\QQ$ always have
  bad reduction at $p=3$, with $f_3\geq 4$. As an application we discuss the
  question of finding Picard curves with small conductor. } \keywords{2010
  {\em Mathematics Subject Classification}. Primary 14H25.  Secondary: 11G30,
  14H45.}

\section{Introduction}
\label{sec:intro_picard}

Let $Y$ be a smooth projective curve of genus $g$ over a number field $K$. To
simplify the exposition, let us assume that $K=\QQ$. With $Y$ we can associate
an $L$-function $L(Y,s)$ and a conductor $N_Y\in\NN$. Conjecturally, the
$L$-function satisfies a functional equation of the form
\[
    \Lambda(Y,s) = \pm \Lambda(Y,2-s),
\]
where
\[
   \Lambda(Y,s) := \sqrt{N_Y}^s\cdot(2\pi)^{-gs}\cdot\Gamma(s)^g \cdot L(Y,s).
\]
By definition, both $L(Y,s)$ and $N_Y$ are a product of local factors. In
this paper we are really only concerned with the conductor, which can be
written as
\[
       N_Y = \prod_p p^{f_p}.
\]
The exponent $f_p$ is called the {\em conductor exponent} of $Y$ at $p$. It is
known that $f_p$ only depends on the ramification of the local Galois
representation associated with $Y$. In particular, if $Y$ has good reduction
at $p$ then $f_p=0$. If $Y$ has bad reduction at $p$ then the computation of
$f_p$ can be quite difficult. Until recently, an effective method for
computing $f_p$ was only known for elliptic curves (\cite{SilvermanAT}, \S
IV.10) and for genus $2$ curves if $p\neq 2$ (\cite{Liu94}).

It was shown in \cite{superell} that $f_p$ can effectively
be computed from the {\em stable reduction} of $Y$ at $p$. Moreover, for
certain families of curves (the {\em superelliptic curves}) we gave a rather
simple recipe for computing the stable reduction. The latter result needed the
assumption that $p$ does not divide the degree $n$. In \cite{superp} this
restriction is removed for  superelliptic curves of prime degree. 

In the present paper we systematically study the case of {\em Picard
  curves}. These are superelliptic curves of genus $3$ and degree $3$, given
by an equation of the form
\[
    Y:\; y^3 = f(x) = x^4+a_3x^3+a_2x^2+a_1x+a_0,
\]
with $f\in\QQ[x]$ separable. Picard curves form in some sense the next
family of curves to study after hyperelliptic curves. They are
interesting for many reasons and have been intensively studied, see
e.g. \cite{Picard}, \cite{Holzapfel}, \cite{KoikeWeng}, and
\cite{Quine}.  

Our main results classify all possible configurations for the stable
reduction of a Picard curve at a prime $p$, and use this to determine
restrictions on the conductor exponents.  For
instance, we prove the following.

\begin{theorem} 
Let $Y$ be a Picard curve over $\QQ$.
\begin{enumerate}[(a)] 
\item Then $Y$ has bad reduction
  at $p=3$, and $f_3\geq 4$.
\item For $p=2$ we have $f_2\neq 1$.
\item  For $p\geq 5$ we have $f_p\in \{0,2,4,6\}$.
\end{enumerate}
\end{theorem}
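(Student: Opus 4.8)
The central tool, by the excerpt's own reference to \cite{superell} and \cite{superp}, is the computation of the conductor exponent $f_p$ from the stable reduction of $Y$ at $p$. My plan is therefore to reduce all three assertions to a classification of the possible stable models, which I expect the paper to have carried out in the sections preceding this theorem. The conductor exponent decomposes as $f_p = \epsilon_p + \delta_p$, where $\epsilon_p$ is the (tame) part measuring the drop in the first Betti number of the special fiber of the stable model and $\delta_p$ is the (wild) Swan contribution; the key structural input is that $\epsilon_p$ counts the toric rank together with the number of components lost, and that $\delta_p$ vanishes whenever the inertia acts tamely. I would organize the proof around these two summands.

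\emph{Part (c), the case $p\geq 5$.} Since $Y:\,y^3=f(x)$ has degree $n=3$ coprime to $p$, the hypothesis of the simple recipe from \cite{superell} applies, so the stable reduction is tame and $\delta_p=0$; hence $f_p=\epsilon_p$ is the tame conductor exponent alone. The plan is to run through the finitely many combinatorial types of the stable reduction of a degree-$3$ superelliptic curve of genus $3$ as enumerated earlier in the paper, and for each type read off the pair (toric rank, number of genus-contributions lost). Because the cyclic group $\mu_3$ acts on the stable reduction, the relevant invariants come in $\mu_3$-stable packets, which forces $\epsilon_p$ to be even; combined with the bound $\epsilon_p\leq 2g=6$ this yields $f_p\in\{0,2,4,6\}$. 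The only real work here is checking that no odd value of $\epsilon_p$ can occur, which follows from the $\mu_3$-equivariance of the semistable reduction rather than from any hard estimate.

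\emph{Part (a), the case $p=3$.} Here $p=3$ divides the degree $n=3$, so the tame recipe fails and one must invoke \cite{superp} for superelliptic curves of prime degree. I would first argue that good reduction at $3$ is impossible: a Picard curve is a $\mu_3$-cover of $\PP^1$ totally ramified over the zeros of $f$ and over infinity, and wild ramification in the $\ZZ/3$-cover in characteristic $3$ is unavoidable, so the reduction is necessarily bad and moreover wildly ramified, giving $\delta_3>0$. The bound $f_3\geq 4$ should then come from combining a nonzero wild part with the tame part: I would use the explicit description of the stable reduction at $3$ from \cite{superp} to compute the conductor-discriminant contribution and show that the Swan conductor of the relevant piece, together with the loss of Betti number, sums to at least $4$. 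The hard part of the whole theorem lives here: controlling the wild contribution $\delta_3$ precisely enough to get a sharp lower bound, since the stable reduction in the wildly ramified case is genuinely delicate and is exactly where the prime-degree analysis of \cite{superp} is needed.

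\emph{Part (b), the case $p=2$.} Now $p=2$ is coprime to $n=3$, so again the tame recipe applies and $f_2=\epsilon_2$ is purely tame, hence even by the same $\mu_3$-equivariance argument as in part (c). An even conductor exponent cannot equal $1$, which gives $f_2\neq 1$ immediately; thus (b) is a formal consequence of the parity statement established for (c) and requires no separate case analysis beyond noting that $p=2$ does not divide $3$.
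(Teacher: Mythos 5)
Your plan for part (a) rests on the claim that at $p=3$ the reduction is ``necessarily wildly ramified, giving $\delta_3>0$,'' and that the bound $f_3\geq 4$ is then extracted from the Swan contribution. Both points fail. What is unavoidable at $p=3$ is only that the extension $L/K$ needed for semistable reduction is \emph{ramified}, because $\zeta_3\notin\QQ_3^{\rm nr}$ (this is Proposition \ref{prop:badat3}); it may well be \emph{tamely} ramified, and then $\delta_3=0$. Indeed Example \ref{exa:potgood} ($y^3=x^4+1$) attains good reduction over $K(\pi)$ with $\pi^8=-3$, a tame extension, and Example \ref{exa:f_3=4I} is similar. So no lower bound for $f_3$ can come out of $\delta$. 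The paper's bound lives entirely in the tame part $\epsilon=6-\dim H^1_\et(\Yb^0,\QQ_\ell)$: any $\tau\in\Gamma$ with $\tau(\zeta_3)=\zeta_3^2$ satisfies $\tau\sigma\tau^{-1}=\sigma^2$, hence acts nontrivially on every \'etale component $W$ of $\Yb$ (Lemma \ref{lem:zeta3}); a Riemann--Hurwitz analysis of the possible $\ZZ/3$-covers $W\to W/G$ in characteristic $3$ (Lemma \ref{lem:genuscomp}) then forces $g(W/\Gamma^0)\leq 1$, and running this through the five reduction types of Theorem \ref{thm:wild1} gives $\dim H^1_\et(\Yb^0,\QQ_\ell)\leq 2$, i.e.\ $\epsilon\geq 4$. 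This group-theoretic mechanism is the missing idea in your sketch.

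For part (b) your premise that ``the tame recipe applies and $f_2=\epsilon_2$ is purely tame, hence even'' is wrong on both counts: the field $L$ over which $Y$ becomes semistable contains the splitting field of the quartic $f$, whose degree divides $4!=24$, so $L/K$ may be wildly ramified at $2$ and $\delta_2\neq 0$; and $f_2$ can in fact be odd (Example \ref{exa:f2odd} has $f_2=19$). The correct argument, as in Theorem \ref{thm:tame2}, is that $\epsilon$ is always even while $\delta$, if nonzero, is at least $2$; these together give $f_2\neq 1$. Finally, for part (c) your outline ($\delta_p=0$ for $p\geq 5$ since $L/K$ is tame, plus $\epsilon\leq 2g=6$ and parity) matches the paper, but the parity step cannot be waved through by ``$\mu_3$-equivariance'': one must show that the loop number $\gamma(\Yb^0)=\dim H^1(\Delta_{\Yb^0})$ is never $1$, which requires the admissible-cover analysis showing that the three singular points forming a $G$-orbit are either all fixed or cyclically permuted by $\Gamma$ (this uses $\zeta_3\in K$ for $p\neq 3$, so that $\Gamma$ centralizes $G$). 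That this is not automatic is shown by the case $p=3$, where $\tau$ fixes exactly one point of such an orbit and swaps the other two, producing $\gamma(\Yb^0)=1$ and the odd value $\epsilon=5$.
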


Theorem \ref{thm:main} is a somewhat stronger version of the first
statement. Theorem \ref{thm:tame2} contains the last two
statements. We also give explicit examples, showing that at least part
of our results are sharp.  Our result can be seen as a complement, for
Picard curves, to a result of Brumer--Kramer (\cite{BrumerKramer},
Theorem 6.2), who prove an upper bound for $f_p$ for abelian varieties
of fixed dimension. Since the conductor of a curve coincides with that
of its Jacobian, the result applies to our situation, as well. A more
careful case-by-case analysis, combined with ideas from
\cite{BrumerKramer}, could probably be used to obtain a more precise
list of possible values for the conductor exponent at $p=2,3$, as well.

In the last section we discuss the problem of constructing Picard
curves with small conductor. As a consequence of the Shafarevich
conjecture (aka Faltings' Theorem), there are at most a finite number
of nonisomorphic curves of given genus and of bounded conductor. But
except in very special cases, no effective proof of this theorem is
known.

In his recent PhD thesis, the first named author has made an extensive search
for Picard curves with good reduction outside a small set of small primes, and
computed their conductor. The Picard curve with the smallest conductor that was
found is the curve
\[
    Y:\;y^3 =x^4-1,
\]
which has conductor
\[
     N_Y = 2^63^6 = 46656.
\]
We propose as a subject for further research to either prove that the above
example is the Picard curve over $\QQ$ with the smallest possible conductor,
or to find (one or all) counterexamples. We believe that the methods presented
in this paper may be very helpful to achieve this goal. 

\section{Semistable reduction}
\label{sec:semistable}

We first introduce the general setup concerning the stable reduction and the
conductor exponents of Picard curves.  As explained in the introduction, the
conductor exponent is a local invariant, encoding information about the
ramification of the local Galois representation associated with the
curve. Therefore, we may replace the number field $K$ by its strict
henselization. In other words, we may work from the start over a henselian
field of mixed characteristic with algebraically closed residue field.

\subsection{Setup and notation} \label{setup}

Throughout Section \ref{sec:semistable} - \ref{sec:tame} the letter $K$ will
denote a field of characteristic zero that is henselian with respect to a
discrete valuation. We denote the valuation ring by $\OO_K$, the maximal
ideal of $\OO_K$ by $\p$ and the residue field by $k=\OO_K/\p$. We assume that
$k$ is algebraically closed of characteristic $p>0$. The most important
example for us is when $K=\QQ_p^{\rm nr}$ is the maximally unramified
extension of the $p$-adic numbers. Then $\p=(p)$ and $k=\bar{\FF}_p$.  

Let $Y/K$ be a Picard curve, given by the equation
\begin{equation} \label{eq:picard_wild1}
  Y:\; y^3 = f(x),
\end{equation}
where $f\in K[x]$ is a separable polynomial of degree $4$. We set
$X:=\PP^1_K$ and interpret \eqref{eq:picard_wild1} as a finite cover
$\phi:Y\to X,\;(x,y)\mapsto x,$ of degree $3$. 

By the Semistable Reduction Theorem (see \cite{DeligneMumford69}),
there exists a finite extension $L/K$ such that the curve
$Y_L:=Y\otimes_K L$ has semistable reduction. Since $g(Y)=3\geq 2$,
there even exists a (unique) distinguished semistable model
$\Y\to\Spec\OO_L$ of $Y_L$, the {\em stable model}
(\cite{DeligneMumford69}, Corollary 2.7). The special fiber
$\Yb:=\Y_s$ of $\Y$ is called the {\em stable reduction} of $Y$. It is
a stable curve over $k$ (\cite{DeligneMumford69}, \S~1), and it only
depends on $Y$, up to unique isomorphism.

It is no restriction to assume that the extension $L/K$ is Galois and contains
a third root of unity $\zeta_3\in L$. Then the cover $\phi_L:Y_L\to X_L$ (the
base change of $\phi$ to $L$) is a Galois cover. Its Galois group $G$ is
cyclic of order $3$, generated by the element $\sigma$ which is determined by
\[
       \sigma(y)=\zeta_3 y.
\]
Let $\Gamma:=\Gal(L/K)$ denote the Galois group of the extension $L/K$. 
The group $\Gamma$ acts faithfully and in a natural way on the scheme
$Y_L=Y\otimes_K L$. We denote by $\tilde{G}$ the subgroup of $\Aut(Y_L)$
generated by $G$ and the image of $\Gamma$. By definition, $\tilde{G}$ is a
semidirect product,
\[
      \tilde{G} = G\rtimes\Gamma.
\]
The action of $\Gamma$ on $G$ via conjugation is determined by the following
formula: for $\tau$ in $\Gamma$ we have 
\begin{equation} \label{eq:picard_wild2}
     \tau\sigma\tau^{-1} = \begin{cases}
          \sigma & \text{if $\tau(\zeta_3)=\zeta_3$,}\\
          \sigma^2 & \text{if $\tau(\zeta_3)=\zeta_3^2$.}
                           \end{cases}
\end{equation}

Because of the uniqueness properties of the stable model, the action of
$\tilde{G}$ on $Y_L$ extends to an action on $\Y$. By restriction, we see that
$\tilde{G}$ has a natural, $k$-linear action on $\Yb$. This action will play a
decisive role in our analysis of the stable reduction $\Yb$. For the rest of
this subsection we focus on the action of the subgroup
$G\subset\tilde{G}$. The role of the subgroup $\Gamma\subset\tilde{G}$ will
become important later. 

\begin{remark} \label{rem:ssmodels}
\begin{enumerate}[(a)]
\item The quotient scheme $\X:=\Y/G$ is a semistable model of
  $X_L=\PP^1_L$, see e.g.~\cite{Raynaud99}, Cor.~1.3.3.i. Since the
  map $\Y\to\X$ is finite and $\Y$ is normal, $\Y$ is the
  normalization of $\X$ in the function field of $Y_L$. This means
  that $\Y$ is uniquely determined by a suitable semistable model $\X$
  of $X_L$.
\item
  Let $\Xb:=\X\otimes k$ denote the special fiber of $\X$ and
  $\bar{\phi}:\Yb\to\Xb$ the induced map. We note that $\bar{\phi}$ is a
  finite $G$-invariant map. It is not true in general that
  $\Yb/G=\Xb$. However, the natural map $\Yb/G\to\Xb$ is radicial and in
  particular a homeomorphism (see e.g.~\cite{Raynaud99}, Prop.~2.4.11). 
\item
  Every irreducible component $W\subset\Yb$ is smooth. To see this
  note that the quotient of $W$ by its stabilizer in $G$ is
  homeomorphic to an irreducible component $Z\subset\Xb$, which is a
  smooth curve of genus $0$. If $W$ has a singular point, then
  $\sigma$ acts on $W$ and permutes the two branches of
  $W$ passing through this point. But since $\sigma$ has order $3$,
  this is impossible. 
\end{enumerate}
\end{remark}
Let $\Delta_{\Yb}$ denote the component graph of $\Yb$: the vertices
are the irreducible components of $\Yb$ and the edges correspond to
the singular points. The stability condition for $\Yb$ means that an
irreducible component of genus $0$ corresponds to a vertex of
$\Delta_{\Yb}$ of degree $\geq 3$. The number of loops of
$\Delta_{\Yb}$ is given by the well known formula
\begin{equation} \label{eq:gamma1}
     \gamma(\Yb):=\dim_\QQ H^1(\Delta_{\Yb},\QQ) = r-s+1,
\end{equation}
where $r$ is the number of edges and $s$ the number vertices of
$\Delta_{\Yb}$.

The curve $\Xb$ is also semistable, but in general not stable. Since $\Xb$ has
arithmetic genus $0$, the component graph $\Delta_{\Xb}$ is a tree, and every
vertex corresponds to a smooth curve of genus $0$. 
It follows from Remark \ref{rem:ssmodels} that
$\Delta_{\Xb}=\Delta_{\Yb}/G$. 

\begin{lemma} \label{lem:stable1}
  If $W\subset\Yb$ is an irreducible component, then $\sigma(W)=W$.
\end{lemma}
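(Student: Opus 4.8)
The plan is to exploit that $\sigma$ has prime order $3$: either $\sigma(W)=W$, or $W,\sigma(W),\sigma^2(W)$ are three distinct components forming a free $G$-orbit. I would argue by contradiction, assuming such a free orbit exists, and derive a contradiction.

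First I would record what a free orbit looks like. By Remark \ref{rem:ssmodels}(a), $\Y$ is the normalization of $\X$ in $L(Y)=L(X)(\sqrt[3]{f})$, so the components of $\Yb$ lying above a fixed component $Z\subset\Xb$ correspond to the places of this Kummer extension above the valuation of $L(X)$ attached to $Z$. A free orbit occurs precisely when this place splits completely; in that case $\bar\phi$ is étale of degree $3$ over $Z$, each of $W,\sigma(W),\sigma^2(W)$ maps isomorphically onto $Z\cong\PP^1$, and in particular these components are smooth of genus $0$ and contain no specialization of a branch point of $\phi$.

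Second, I would rule out positive genus, which isolates the real difficulty. Since $Y/G=X=\PP^1_L$ has genus $0$, transfer gives $H^1_{\et}(Y\otimes_K\bar K,\QQ_\ell)^\sigma=0$ for $\ell\neq 3,p$. The $\sigma$-action commutes with the weight filtration on $\ell$-adic $H^1$ coming from the semistable reduction, whose graded pieces are the abelian part $\bigoplus_v H^1(W_v)$ and two copies of the toric part $H^1(\Delta_{\Yb},\QQ_\ell)$. On a free orbit $\sigma$ permutes the summands $H^1(\sigma^i W)$ cyclically, so it contributes the eigenvalue $1$ with multiplicity $2g(W)$; hence the vanishing of $\sigma$-invariants forces $g(W)=0$, consistent with the previous paragraph. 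Note the toric part contributes nothing here, since $H^1(\Delta_{\Yb})^\sigma=H^1(\Delta_{\Yb}/G)=H^1(\Delta_{\Xb})=0$ because $\Delta_{\Xb}$ is a tree.

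The hard part is therefore to exclude free orbits of rational components. Here I would bring in the geometry special to Picard curves. The cover $\phi$ is totally ramified at exactly five points (the four zeros of $f$ and $\infty$); in particular $\infty_Y$ is a $K$-rational, $\sigma$-fixed point whose specialization is a smooth point lying on a single, necessarily $\sigma$-fixed component $W_\infty$. By the first paragraph a free-orbit component carries no ramification, so all five ramification specializations must lie on fixed components. Combining this with the stability condition — each rational component of $\Yb$ carries at least three nodes — and with the explicit description of the stable reduction of the degree-$3$ cover from \cite{superell, superp}, I would show that no component $Z\subset\Xb$ over which the cover becomes étale can survive in the stable model: such a $Z$ would force three unramified rational components whose configuration is incompatible with the tree $\Delta_{\Xb}$ together with the placement of the five totally ramified branch points. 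I expect this last incompatibility, rather than the cohomological input, to be the main obstacle, since it is precisely the point where the fine analysis of the cover's degeneration is required.
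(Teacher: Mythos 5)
There is a genuine gap: the step you yourself identify as ``the hard part'' --- excluding a free orbit of rational components --- is exactly the content of the lemma, and you do not actually carry it out. You assert that a split component $Z\subset\Xb$ ``would force three unramified rational components whose configuration is incompatible with the tree $\Delta_{\Xb}$ together with the placement of the five totally ramified branch points,'' but no such incompatibility follows from the branch-point placement alone: nothing in the tree structure of $\Delta_{\Xb}$ or in the requirement that the five ramification specializations lie on $\sigma$-fixed components immediately forbids an interior component $Z$ over which the cover splits. The actual mechanism in the paper is different and quantitative. One first notes (as you do) that each $W_i\cong Z$ has genus $0$, so by stability each $W_i$ carries at least three nodes, hence $Z$ carries at least three nodes of $\Xb$. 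The key step is then to show that for each attached subtree $T\subset\Xb$ meeting $Z$ at a node $\xb$, the full preimage $S\subset\Yb$ of $T$ is \emph{connected}; this uses stability again, applied to a tail component of $T$, to force the component of $\Yb$ above that tail to be \'etale with nontrivial $\sigma$-action. Connectedness of $S$ means that contracting everything except the $W_i$ identifies the three points of $W_1\cup W_2\cup W_3$ above $\xb$ to a single point. Doing this at three nodes of $Z$ produces a curve $\Yb_0$ with three components meeting pairwise at three points, whence $\gamma(\Yb_0)\geq 1$ at each such point and the arithmetic genus of $\Yb_0$ is at least $4$, contradicting $g(Y)=3$. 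Without this connectedness-plus-genus-count argument (or a substitute for it), your proof does not close.

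Two smaller remarks. Your second paragraph is redundant: once you know each component of a free orbit maps isomorphically onto $Z\cong\PP^1$, its genus is $0$ with no cohomological input needed, and the transfer/weight-filtration argument adds nothing (and would itself need justification relating generic-fibre invariants to the special fibre). Also, the observation that no branch point of $\phi$ specializes to a free-orbit component is correct (a totally ramified point specializes to a $\sigma$-fixed point, which would have to lie on all three components, contradicting semistability), but, as noted above, it is not the lever that makes the proof work.
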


\begin{proof} To derive a contradiction, we
 assume that $W_1,W_2,W_3\subset\Yb$ are
three distinct components that form a single $G$-orbit. Then $W_i\iso
Z:=\bar{\phi}(W_i)$. Since $Z$ is a component of $\bar{X}$, we
conclude that $g(W_i)=0$, for $i=1,2,3$. The stability condition on
$\Yb$ implies that each $W_i$ contains at least three singular points
of $\Yb$. Hence $Z$ also contains at least three singular points of
$\Xb$.

Let $\Yb\to\Yb_0$ denote the unique morphism which contracts all components of
$\Yb$ except the $W_i$ and which is an isomorphism on the intersection of
$\cup_i W_i$ with the smooth locus of $\Yb$. Similarly, let $\Xb\to \Xb_0$ be
the map contracting all components of $\Xb$ except $Z$. These maps fit into a
commutative diagram
\[
  \xymatrix{ \Yb \ar[r]\ar[d] & \Yb_0 \ar[d] \\
             \Xb \ar[r]       & \Xb_0,       \\
           }
\]
where the vertical arrows are quotient maps by the group $G$ (at least
for the underlying topological spaces). Also, $\Xb_0\cong Z$. 

Let $\xb\in Z$ be one of the singular point of $\Xb$ lying on $Z$, and
let $T\subset\Xb$ the closed subset which is contracted to $\xb\in
Z=\Xb_0$. Then $T$ is a nonempty and connected union of irreducible
components of $\Xb$ and hence a semistable curve of genus $0$. In
particular, the component graph of $T$ is a tree. Let $Z'\subset T$ be
a tail component. As a component of $\Xb$, $Z'$ intersects the rest of
$\Xb$ in at most two points. Let $W'\subset\Yb$ be an irreducible
component lying above $Z'$. The stability of $\Yb$ implies that
$\sigma(W')=W'$ and that the action of $\sigma$ on $W'$ is
nontrivial. (Otherwise $W'$ would be homeomorphic to $Z'$, and
hence $W'$ would be a component of genus $0$ intersecting the rest of
$\Yb$ in at most two points. )
It follows  that the inverse image $S\subset\Yb$ of $T$ is connected.
Note that $S$ meets the component $W_i$ in the unique point on $W_i$
above $\xb$. Since $S$ is connected, it follows that the map
$\Yb\to\Yb_0$ contracts $S$ to a single point. 

We conclude that the curve $\Yb_0$ has at least three distinct
singular points where all three components $W_i$ meet. Equation
(\ref{eq:gamma1}) implies that $\gamma(\Yb_0)$ is at least $1$.  It
follows that the arithmetic genus of $\Yb_0$ is $\geq 4$, and hence
$g(\Yb)\geq 4$ as well. This is a contradiction, and the lemma follows.
\qed
\end{proof}

\subsection{The conductor exponent}

Let $\mathfrak{c}_\p$ be the {\em conductor} of the
$\Gal(\bar{K}/K)$-representation $H^1_\et(Y_{\bar{K}},\QQ_\ell)$, see
\cite{SerreZeta}. By definition, this is an ideal of $\OO_K$ of the form
\[
      \mathfrak{c}_\p = \p^{f_\p},
\]
with $f_\p\geq 0$. The integer $f_\p$ is called the {\em conductor exponent}
of $Y/K$.\footnote{When working in a local context, $f_\p$ is often simply
  called the conductor of $Y$.}

We recall from \cite{superell} an explicit formula for $f_\p$,
in terms of the action of $\Gamma=\Gal(L/K)$ on
$\Yb$. For this we let $\Gamma^u\subset\Gamma$, for $u\geq 0$, denote
the $u$th higher ramification group (in the { upper
  numbering}). We set $\Yb^u:=\Yb/\Gamma^u$. Note that $\Yb^u$ is a
semistable curve for all $u$. Note also that $\Gamma=\Gamma^0$ because the
residue field $k$ is assumed to be algebraically closed. 

\begin{proposition}\label{prop:fp}
  The conductor exponent of the curve $Y/K$ is given by 
  \begin{equation} \label{eq:fp1}
     f_\p = \epsilon + \delta,
  \end{equation}
  where 
  \begin{equation} \label{eq:fp2}
     \epsilon := 6 - \dim\, H^1_{\rm et}(\Yb^0,\QQ_\ell)
  \end{equation}
  and
  \begin{equation} \label{eq:fp3}
     \delta := \int_{0}^\infty \big(6-2g(\Yb^u)\big) {\rm d}u.
  \end{equation}
\end{proposition}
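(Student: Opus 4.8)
The plan is to identify $f_\p$ with the Artin conductor $a(V)$ of the $\ell$-adic representation $V:=H^1_\et(Y_{\bar{K}},\QQ_\ell)$ of $G_K:=\Gal(\bar{K}/K)$, and to evaluate it through the stable reduction $\Yb$. Since $k$ is algebraically closed, $G_K$ is the inertia group, $I_L:=\Gal(\bar{K}/L)$ is normal, and $G_K/I_L=\Gamma$. I would start from the upper-numbering expression of the conductor \cite{SerreZeta},
\begin{equation*}
  a(V)=\int_{-1}^{\infty}\dim\big(V/V^{G_K^u}\big)\,\mathrm{d}u,
\end{equation*}
where $G_K^u$ is the ramification filtration in the upper numbering, and split the integral at $u=0$. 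On $[-1,0]$ one has $G_K^u=G_K$, so this part contributes $\dim(V/V^{G_K})=6-\dim V^{G_K}$ and becomes $\epsilon$ (recall $\dim V=2g(Y)=6$); the part $\int_0^{\infty}$ is the Swan conductor and becomes $\delta$.

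For the tame part I would compute $V^{G_K}$. Since $Y_L$ has semistable reduction, $I_L$ acts unipotently and, by the theory of vanishing cycles, the specialization map identifies $H^1_\et(\Yb,\QQ_\ell)$ $\Gamma$-equivariantly with the monodromy invariants $V^{I_L}=\ker N=W_1V$. Taking the remaining $\Gamma$-invariants and using that over $\QQ_\ell$ the cohomology of a finite quotient equals the invariants, $V^{G_K}=(V^{I_L})^{\Gamma}=H^1(\Yb)^{\Gamma}=H^1(\Yb/\Gamma)=H^1(\Yb^0)$, so that $\epsilon=6-\dim H^1_\et(\Yb^0,\QQ_\ell)$.

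For the wild part I would show $\dim V^{G_K^u}=2g(\Yb^u)$ for every $u>0$. The key points are: (a) the monodromy of a semistably reduced curve is tame, so the wild inertia $P_L=P_K\cap I_L$ acts trivially on $V$; as $G_K^u\subseteq P_K$ for $u>0$, the subgroup $G_K^u\cap I_L\subseteq P_L$ acts trivially and the $G_K^u$-action factors through the finite quotient $\Gamma^u=G_K^uI_L/I_L$ (Herbrand). (b) Because $\ell\neq p$ and $\Gamma^u$ acts through a finite quotient, the weight filtration splits $\Gamma^u$-equivariantly, so $V^{G_K^u}=V^{\Gamma^u}$ decomposes as the sum of invariants on $\mathrm{gr}^W_0V\cong H^1(\Delta_{\Yb})$, $\mathrm{gr}^W_1V\cong\bigoplus_W H^1(W)$, and $\mathrm{gr}^W_2V\cong H_1(\Delta_{\Yb})(-1)$. (c) On the wild group $G_K^u$ the cyclotomic character is trivial, so the Tate twist on $\mathrm{gr}^W_2$ disappears; the invariants then contract the dual graph and the components by $\Gamma^u$, giving $\gamma(\Yb^u)+2a(\Yb^u)+\gamma(\Yb^u)=2p_a(\Yb^u)=2g(\Yb^u)$, where $a(\Yb^u)$ is the sum of the genera of the components of $\Yb^u$. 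Assembling, $\delta=\int_0^{\infty}\big(6-2g(\Yb^u)\big)\,\mathrm{d}u$, whence $f_\p=\epsilon+\delta$.

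The main obstacle is precisely the contrast between $u=0$ and $u>0$, which explains why $\epsilon$ features $\dim H^1(\Yb^0)$ while $\delta$ features $2g(\Yb^u)$. At $u=0$ the tame monodromy operator $N$ acts nontrivially, so only $\ker N=W_1V$ survives and the weight-$2$ toric summand is lost, giving $\dim H^1(\Yb^0)=2a(\Yb^0)+\gamma(\Yb^0)$; for $u>0$ the wild inertia does not see $N$ and, once the twist on $\mathrm{gr}^W_2$ trivializes, the weight-$2$ toric part contributes its invariants as well, producing the extra $\gamma(\Yb^u)$ and hence $2g(\Yb^u)$. Carefully justifying the triviality of $P_L$ and of the cyclotomic twist on $V$, together with the equivariant splitting of the weight filtration, is the technical heart; this is exactly the input we may import from \cite{superell}, specialized here to $2g(Y)=6$.
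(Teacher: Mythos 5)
The paper does not actually prove this proposition: it simply cites \cite{superell}, Theorem 2.9 and \cite{ICERM}, Corollary 2.14. Your argument is a correct self-contained reconstruction of what those references establish, and it follows the standard route: identify $f_\p$ with the Artin conductor of $V=H^1_\et(Y_{\bar K},\QQ_\ell)$, write $a(V)=\int_{-1}^{\infty}\dim(V/V^{G_K^u})\,{\rm d}u$, use the local invariant cycle theorem for semistable curves to get $V^{I_L}\cong H^1_\et(\Yb,\QQ_\ell)$ and hence $\epsilon=6-\dim H^1_\et(\Yb^0,\QQ_\ell)$, and for $u>0$ use triviality of $P_L$ on $V$ (unipotence of $I_L$ plus $\ell\neq p$), Herbrand, exactness of $\Gamma^u$-invariants across the weight filtration, and triviality of the cyclotomic character on wild inertia to get $\dim V^{G_K^u}=2\gamma(\Yb^u)+2a(\Yb^u)=2g(\Yb^u)$. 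Your explanation of why the weight-$2$ toric piece is lost at $u=0$ but recovered for $u>0$ is exactly the right point. Two details deserve explicit care if you write this up in full: (i) the identification $H^1(\Delta_{\Yb})^{\Gamma^u}\cong H^1(\Delta_{\Yb^u})$ (and likewise for $H_1$) requires handling elements that fix a node while swapping its two branches, in which case the node becomes a smooth point of the quotient and the corresponding edge must be treated via the topological realization of the graph (this is what \cite{superell}, Lemma 2.7 takes care of); and (ii) the $\Gamma$-equivariance of the weight filtration and of the cospecialization map should be stated, since the geometric action of $\Gamma^u$ on $\Yb$ is what enters $g(\Yb^u)$. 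Neither is a gap in substance, only in bookkeeping.
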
        

\begin{proof}
See \cite{superell}, Theorem 2.9 and \cite{ICERM}, Corollary 2.14.
\qed
\end{proof}
 
The \'etale cohomology group $H^1_{\rm
  et}(\Yb^u,\QQ_\ell)$ decomposes as
\[
H^1_{\rm et}(\Yb^u,\QQ_\ell)=\oplus_{W} H^1_{\rm
  et}(W,\QQ_\ell)\oplus H^1(\Delta_{\Yb^u},\QQ_\ell),
\]
where the first sum runs over the set of irreducible components $W$ of
the normalization of $\Yb^u$ and $\Delta_{\Yb^u}$ is the graph of
components of $\Yb^u$. (See \cite{superell}, Lemma 2.7.(1).)
Therefore, the second term in \eqref{eq:fp2} can be written as
\begin{equation}
   \dim\, H^1_{\rm et}(\Yb^0,\QQ_\ell) = \sum_W \dim\, H^1_{\rm
  et}(W,\QQ_\ell) \;+\; \dim\, H^1(\Delta_{\Yb^0}).
\end{equation}

The arithmetic genus of
$\Yb^u$, which occurs in \eqref{eq:fp3}, is given by the formula
\begin{equation}\label{eq:H11}
  g(\Yb^u)=\sum_{W} g(W) \;+\; \dim\,  H^1(\Delta_{\Yb^u}).
\end{equation}
For future reference we note that $\dim\, H^1_{\rm
  et}(W,\QQ_\ell)=2g(W)$. The integer $\gamma(\Yb^0):=\dim\,
H^1(\Delta_{\Yb^0})$ can be interpreted as the number of loops of the
graph $\Delta_{\Yb^0}$. It is bounded by $g(\Yb^0)$, and hence by
$g(Y)=3$.

\begin{lemma} \label{lem:delta}
  The following statements are equivalent.
  \begin{enumerate}[(a)]
  \item
    $\delta=0$.
  \item
    $\Gamma^u$ acts trivially on $\Yb$, for all $u>0$.
  \item
    The curve $Y$ has semistable reduction over a tamely ramified extension of
    $K$. 
  \end{enumerate}
\end{lemma}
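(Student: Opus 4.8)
The plan is to prove the cycle of implications, with the easy direction (b)$\Rightarrow$(a) first, then the descent-theoretic equivalence (b)$\Leftrightarrow$(c), and finally the substantial implication (a)$\Rightarrow$(b). The backbone is the observation that the integrand in \eqref{eq:fp3} is \emph{nonnegative}. Indeed, since $\Yb^u=\Yb/\Gamma^u$ is semistable with dual graph $\Delta_{\Yb^u}=\Delta_{\Yb}/\Gamma^u$, formula \eqref{eq:H11} gives $g(\Yb^u)=\sum_{W'}g(W')+\dim H^1(\Delta_{\Yb^u})$, and both summands are bounded by the corresponding quantities for $\Yb$: the component genera because $g(W/\mathrm{Stab}\,W)\le g(W)$ and because orbits of size $>1$ contribute only once, and the graph term because $\dim H^1(\Delta_{\Yb}/\Gamma^u)=\dim H^1(\Delta_{\Yb})^{\Gamma^u}\le\dim H^1(\Delta_{\Yb})$. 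Hence $g(\Yb^u)\le g(\Yb)=g(Y)=3$, so $6-2g(\Yb^u)\ge 0$. As $u\mapsto\Gamma^u$ assumes only finitely many values, the integrand is a nonnegative step function, and therefore $\delta=0$ if and only if $g(\Yb^u)=3$ for all $u>0$. The direction (b)$\Rightarrow$(a) is now immediate: if each $\Gamma^u$ ($u>0$) acts trivially then $\Yb^u=\Yb$, so $g(\Yb^u)=3$ and $\delta=0$.

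For (b)$\Leftrightarrow$(c) I would use the theory of semistable reduction together with the identification of $\Gamma^{0+}:=\bigcup_{u>0}\Gamma^u$ as the wild inertia, i.e.\ the unique $p$-Sylow of $\Gamma$, which equals $\Gal(L/L_0)$ for $L_0$ the maximal tamely ramified subextension of $L/K$. Condition (b) is exactly the statement that $\Gamma^{0+}$ acts trivially on $\Yb$, since $\Gamma^u=\Gamma^{0+}$ for small $u>0$. If $\Gamma^{0+}$ acts trivially on $\Yb$, then the descent obstruction for the stable model $\Y/\OO_L$ along $\Gal(L/L_0)$ (which lives on the special fiber, the generic fiber being defined over $K$) vanishes, so $\Y$ descends to $\OO_{L_0}$; as $L_0/K$ is tame, $Y$ has semistable reduction over a tame extension, which is (c). Conversely, if $Y$ has semistable reduction over a tame extension $K'$, I may arrange $K'\subseteq L$ by passing to a compositum, and then $K'\subseteq L_0$; compatibility of stable reduction with the base change to $L$ (over an algebraically closed residue field) shows that $\Gal(L/K')\supseteq\Gamma^{0+}$ fixes $\Yb$, giving (b).

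The implication (a)$\Rightarrow$(b) is where the real work lies. By the reduction above, $\delta=0$ forces $g(\Yb/\Gamma^{0+})=g(\Yb)=3$, and I must deduce that $\Gamma^{0+}$ acts trivially on $\Yb$. Since arithmetic genus is monotone along the tower $\Yb\to\Yb/Q\to\Yb/\Gamma^{0+}$, it suffices to treat a cyclic subgroup $Q=\langle\tau\rangle$ of order $p$: if every such $\tau$ with $g(\Yb/\tau)=g(\Yb)$ were trivial, no nontrivial $Q$ could preserve the genus, and $\Gamma^{0+}$ would act trivially. Thus the lemma reduces to the following rigidity statement, which I expect to be \emph{the main obstacle}: \emph{a nontrivial automorphism $\tau$ of order $p$ of a stable curve $\Yb$ over $k=\bar\FF_p$ satisfies $g(\Yb/\tau)<g(\Yb)$.}

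To establish this rigidity I would again split $g$ into its component and graph contributions and analyze each, using that equality in the global estimate forces equality in both. On the components, a nontrivial action of $\tau$ on a smooth component $W$ of genus $\ge 1$ strictly lowers its genus by Riemann--Hurwitz; the one delicate case is a fixed-point-free (translation-type) action on a genus-one component, but this is excluded because $\tau$ must permute the $\ge 1$ nodes lying on such a component, and the stability condition, together with the argument of Remark \ref{rem:ssmodels}(c) and Lemma \ref{lem:stable1}, pins down enough marked points to force a fixed point. Hence all positive-genus components are fixed and acted on trivially, and $\tau$ can only permute the rational components. On the dual graph, a rational component corresponds to a vertex of valence $\ge 3$, and I would show that any nontrivial permutation of such vertices (with the positive-genus vertices fixed) strictly decreases $\dim H^1(\Delta_{\Yb})$: writing $b_1=r-s+1$ as in \eqref{eq:gamma1} and comparing the numbers of $\tau$-orbits of edges and vertices, the valence bound obstructs the balancing $r_p=s_p$ that would be needed to preserve $b_1$ (the balanced configurations are precisely the necklaces of valence-two vertices forbidden by stability). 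Assembling these two analyses yields $g(\Yb/\tau)<g(\Yb)$ whenever $\tau\ne 1$, completing (a)$\Rightarrow$(b) and hence the equivalence.
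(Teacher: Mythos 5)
Your overall strategy coincides with the paper's: the easy directions are handled the same way, (b)$\Leftrightarrow$(c) is the standard descent/base-change statement (the paper simply cites \cite{liu}, Theorem 4.44, for (b)$\Rightarrow$(c)), and (a)$\Rightarrow$(b) is reduced, via the nonnegativity of the integrand in \eqref{eq:fp3} and the decomposition \eqref{eq:H11}, to the claim that a nontrivial action of the wild inertia on $\Yb$ strictly lowers the genus of the quotient. The paper dispatches this last point with ``one easily shows,'' so your attempt to actually prove the rigidity statement is the right instinct.

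One sub-step of that rigidity argument is, however, wrong as stated. You claim a fixed-point-free translation of order $p$ on a genus-one component is excluded because stability ``pins down enough marked points to force a fixed point'' (and Remark \ref{rem:ssmodels}(c) and Lemma \ref{lem:stable1} do not help here: they concern $\sigma$, not an element of $\Gamma^u$). It is not excluded: take $p=3$ and the configuration of Case (d) of Theorem \ref{thm:wild1}, a genus-one component $W_1$ meeting a genus-zero component $W_2$ in three nodes forming a single free orbit. This is stable, and translation by an \'etale $3$-torsion point of an ordinary $W_1$ cyclically permutes the three nodes without fixing any point of $W_1$, while preserving $g(W_1)$ and $g(W_2)$. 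The genus of the quotient does drop, but only through the graph term: the three edges form a free orbit between two \emph{fixed} vertices, so $b_1$ falls from $2$ to $0$. This also shows that your graph-theoretic step cannot be confined to ``nontrivial permutations of vertices'': writing $r'$, $s'$ for the numbers of edge- and vertex-orbits, you must check the balance $r-r'=s-s'$ even when every vertex is fixed. With that repair --- for $\tau\neq 1$, either some component's genus drops or the edge/vertex orbit balance fails --- your two-part analysis closes the gap and recovers the paper's (terser) proof.
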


\begin{proof}  Assume that
  $\delta=0$. By \eqref{eq:fp3} this means that $3=g(\Yb)=g(\Yb^u)$ for all
  $u>0$. Using \eqref{eq:H11} one easily shows that this means that $\Gamma^u$
  acts trivially on the component graph $\Delta_{\Yb}$ of $\Yb$. Moreover, for
  every component $W\subset\Yb$ we have $g(W)=g(W/\Gamma^u)$. It follows that
  $\Gamma ^u$ acts trivially on $\Yb$. We have proved the implication
  (a)$\Rightarrow$(b). The implication (b)$\Rightarrow$(c) follows from
  \cite{liu}, Theorem 4.44. The implication (c)$\Rightarrow$(a) follows
  immediately from the definition of $\delta$.  \qed
\end{proof}


\section{The wild case: $p=3$}
\label{sec:picard_wild}

In this section we assume that $p=3$. We first analyze the special fiber of
the stable model of $Y_L$, and show that there are essentially five reduction
types. From \S~\ref{sec:picard_wild2} we consider the case where $K$ is
absolutely unramified, and derive a lower bound for the conductor exponent
$f_3$.

\subsection{The stable model}
\label{sec:picard_wild1}

We keep all the notation introduced in \S~\ref{sec:semistable}. In addition,
we assume that $p=3$. Lemma \ref{lem:stable1} implies that we can distinguish
between two types of irreducible components of $\Yb$.

\begin{definition} \label{def:etale}
  An irreducible component $W\subset \Yb$ is called {\em \'etale} if the
  restriction $\sigma|_W\in\Aut_k(W)$ is nontrivial. If $\sigma|_W$ is the
  identity, then $W$ is called an {\em inseparable component}.
\end{definition}
 
Let $W\subset\Yb$ be an irreducible component, and let
$Z:=\bar{\phi}(W)\subset\Xb$ be its image. Then $Z$ is an irreducible
component of $\Xb$ and hence a smooth curve of genus $0$. Lemma
\ref{lem:stable1} shows that $\sigma(W)=W$. It follows that $W/G\to Z$ is a
homeomorphism.  If $W$ is an inseparable component, then $W\to Z$ is a purely
inseparable homeomorphism (since $W\to Z$ has degree $3$, this can only happen
when $p=3$). It follows that every inseparable component has genus zero.

If $W$ is an \'etale component, then $Z\cong W/G$, and $W\to Z$ is a
$G$-Galois cover. For future reference we recall that the Riemann--Hurwitz
formula for wildly ramified Galois covers of curves yields
\begin{equation}\label{eq:RH}
2g(W)-2=-2\cdot 3+\sum_{z} 2(h_z+1),
\end{equation}
where the sum runs over the branch points of $W\to Z$ and $h_z$ is the
(unique) jump in the filtration of the higher ramification groups in
the lower numbering. We have that $h_z\geq 1$ is prime to $p$
(\cite{SerreCL}, \S~IV.2, Cor.~2 to Prop.~9).

\begin{theorem} \label{thm:wild1}
  We are in exactly one of the following five cases.
  \begin{enumerate}[(a)]
  \item
    The curve $\Yb$ is smooth and irreducible.
  \item There are exactly two components $W_1,W_2$ which are both \'etale,
    meet in a single point, and have genus $g(W_1)=2$, $g(W_2)=1$.
  \item
    There are three \'etale components $W_1,W_2,W_3$ of genus one, and one
    inseparable component $W_0$ of genus zero. For $i=1,2,3$, $W_i$ intersects
    $W_0$ in a unique point, and these intersection points are precisely the
    singular points of $\Yb$.
  \item
    There are two components $W_1,W_2$ which are \'etale of genus $g(W_1)=1$,
    $g(W_2)=0$. There are exactly three singular points, which form an orbit
    under the action of $G$, and where $W_1$ and $W_2$ meet.
  \item
    There are three components $W_1,W_2,W_3$, which are \'etale and of genus
    $g(W_1)=g(W_2)=0$ and $g(W_3)=1$, and four singular points. Three of the
    singular points are points of intersection of $W_1$ and $W_2$, and form an
    orbit under the action of $G$. The fourth singular point is the point of
    intersection of $W_2$ and $W_3$. 
  \end{enumerate}
\end{theorem}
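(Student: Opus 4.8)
The plan is to classify the stable reduction $\Yb$ by a careful bookkeeping argument combining the genus constraint $g(Y)=3$, the structure imposed by the $G$-action (via Lemma \ref{lem:stable1} and the \'etale/inseparable dichotomy), and the stability condition. The fundamental genus equation is
\[
  3 = g(\Yb) = \sum_W g(W) + \gamma(\Yb),
\]
where the sum runs over all irreducible components $W$ and $\gamma(\Yb)$ counts the loops of the component graph $\Delta_{\Yb}$. Since every $W$ satisfies $\sigma(W)=W$, each component is either \'etale (with $g(W)\geq 0$ coming from a wildly ramified $G$-cover governed by \eqref{eq:RH}) or inseparable (forcing $g(W)=0$). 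This immediately bounds the total number of \'etale components of positive genus and the total number of loops, so the possible ``genus profiles'' form a short finite list.

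First I would record the arithmetic constraints on \'etale components. From \eqref{eq:RH}, an \'etale component of genus $0$ has either one branch point with jump $h=1$ (so it is totally ramified at a single point, behaving like $W\to Z$ branched at one place) or two branch points; genus $1$ requires exactly one branch point with $h=2$ or a suitable combination, and genus $2$ a single branch point with $h=4$ or two branch points with small jumps. I would enumerate which $(g(W), \text{number of branch points})$ combinations are actually realizable subject to $h_z\geq 1$ prime to $p=3$, and use this to constrain how components can attach to one another. The key structural input is that branch points of the $G$-cover $W\to Z$ correspond to points where $W$ meets the rest of $\Yb$ in a $G$-fixed manner, whereas unramified points above a node of $\Xb$ come in $G$-orbits of size $3$ (giving the orbits of three singular points seen in cases (d) and (e)).

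Next I would combine the genus profiles with the stability condition and the tree structure of $\Delta_{\Xb}=\Delta_{\Yb}/G$. Stability forces every genus-$0$ component to carry at least three special points, which rules out configurations with too few nodes; meanwhile the quotient graph being a tree (arithmetic genus of $\Xb$ is $0$) controls how loops in $\Delta_{\Yb}$ can arise—namely only from $G$-orbits of three nodes joining two components, each such orbit contributing a single loop to $\gamma(\Yb)$. Working through the allowed total genus $3 = \sum g(W) + \gamma(\Yb)$ with these constraints, the possibilities collapse to: one smooth component (case (a)); two \'etale components of genera $2,1$ meeting at one point (case (b)); the symmetric configuration with an inseparable central component and three genus-$1$ tails (case (c)); and the two configurations involving a loop from a three-node orbit (cases (d) and (e)).

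The main obstacle I anticipate is not the genus arithmetic but showing that no \emph{other} configurations survive—in particular, ruling out inseparable components in all cases except (c), and proving that loop-creating three-node orbits occur in exactly the constrained ways of (d) and (e). This requires a genuinely geometric argument: one must analyze how an inseparable component (which maps purely inseparably and homeomorphically onto its image $Z$) can attach to \'etale components, and verify that stability together with $g(Y)=3$ forbids, say, two inseparable components or an inseparable component attached to a higher-genus piece. I would handle this by exploiting the homeomorphism $\Yb/G\to\Xb$ together with the tree structure of $\Delta_{\Xb}$, tracing how each node of $\Xb$ lifts (to one $G$-fixed node or a $G$-orbit of three nodes) and checking stability component-by-component on the quotient side, where the genus-$0$ rationality of every $Z\subset\Xb$ makes the combinatorics tractable.
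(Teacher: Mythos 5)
Your overall strategy is the same as the paper's: combine the genus decomposition $3=\sum_W g(W)+\gamma(\Yb)$ with Lemma \ref{lem:stable1}, the Riemann--Hurwitz formula \eqref{eq:RH} for the \'etale components, the stability condition, and the fact that $\Delta_{\Xb}=\Delta_{\Yb}/G$ is a tree. However, there is a concrete error at the linchpin of the argument: you assert that each $G$-orbit of three nodes joining two ($\sigma$-stable) components contributes \emph{a single} loop to $\gamma(\Yb)$. It contributes \emph{two}: two vertices joined by three edges give $\gamma=3-2+1=2$. This is exactly the computation the paper isolates as \eqref{eq:gamma4}, namely $\gamma(\Yb)=2r_2$ where $r_2$ is the number of length-$3$ node orbits; combined with $\gamma(\Yb)\le g(\Yb)=3$ it forces $r_2\in\{0,1\}$ and $\gamma(\Yb)\in\{0,2\}$, which is what collapses the enumeration to the five listed cases. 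With your count $\gamma(\Yb)=r_2$ you would have to entertain $r_2$ up to $3$ and configurations such as $\gamma(\Yb)=1$ alongside a genus-$2$ component, and the classification would not close up. Your stated Riemann--Hurwitz profiles are also off: for $g(W)=0$ there is exactly one branch point with $h=1$ (two branch points force $g\ge 2$); $g(W)=2$ forces two branch points with jumps $(1,1)$ (a single jump $h=3$ being excluded since $h$ is prime to $3$); and a single branch point with $h=4$ gives $g=3$, not $2$.

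Beyond that, the exclusion arguments you correctly identify as the hard part are only gestured at, whereas they carry real content in the paper. In the tree case one must rule out a chain of three genus-$1$ \'etale components with two nodes: a middle component would have two $\sigma$-fixed nodes, hence the $G$-cover $W\to Z$ would be branched at $\ge 2$ points and \eqref{eq:RH} forces $g(W)\ge 2$, a contradiction. The same computation shows the central genus-$0$ component in case (c) cannot be \'etale (three branch points would force $g\ge 4$), which is how the unique inseparable component arises. In the loop case, stability plus the uniqueness of the positive-genus tail is what pins down cases (d) and (e). These are short arguments, but your write-up does not supply them, and together with the loop-count error the enumeration as proposed would not actually terminate in the correct list.
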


\begin{proof}   Let $r_1$
  (resp. $s_1$) be the number of singular points (resp.\ irreducible
  components) of $\Yb$ which are fixed by $\sigma$, and let $r_2$
  (resp. $s_2$) be the number of orbits of singular point
  (resp.\ irreducible components) of $\Yb$ of length $3$. Lemma
  \ref{lem:stable1} states that $s_2=0$. Therefore, \eqref{eq:gamma1}
  becomes
\begin{equation} \label{eq:gamma2}
  \gamma(\Yb) = r-s+1 = r_1+3r_2 - s +1.
\end{equation}
Because $\Delta_{\Xb}=\Delta_{\Yb}/G$ is a tree, we have
\begin{equation} \label{eq:gamma3} 
  \gamma(\Xb) :=\dim H^1(\Delta_{\Xb}) = r_1+r_2 -s +1 = 0.
\end{equation}
Combining \eqref{eq:gamma2} and \eqref{eq:gamma3} we obtain
\begin{equation} \label{eq:gamma4}
  \gamma(\Yb) = 2r_2.
\end{equation}
Since $0\leq \gamma(\Yb)\leq 3$, we conclude that $\gamma(\Yb)\in\{0,2\}$ and
$r_2\in\{0,1\}$.  

\bigskip
{\bf Case 1}: $r_2=0$ and $\gamma(\Yb)=0$.\\ In this case
$\Delta_{\Yb}$ is a tree, and the sum of the genera of all irreducible
components is $3$. In particular, there are at most $3$ components of
genus $>0$. Moreover, the stability condition implies that every
component of genus zero contains at least three singular points of
$\Yb$. It is an easy combinatorial exercise to see that this leaves us
with exactly four possibilities for the tree $\Delta_{\Yb}$. Going
through these four cases we will see that one of them is excluded,
while the remaining three correspond to Case (a), (b), and (c) of
Theorem \ref{thm:wild1}.

The first case is when $\Yb$ has a unique irreducible component. Then $\Yb$ is
smooth. This is Case (a) of the lemma. Secondly, there may be two irreducible
components, of genus $1$ and $2$, and a unique singular point. This
corresponds to Case (b). 

Thirdly, there may be three irreducible components, each of genus $1$,
and two singular points. We claim that this case cannot occur. Indeed,
one of the three components would contain two singular points, and
each of these two points must be a fixed point of $\sigma$. It follows
that the $G$-cover $W\to Z=W/G$ is ramified in at least two
points. The Riemann--Hurwitz formula (\ref{eq:RH}) implies that
$g(W)\geq 2$. This yields a contradiction, and we conclude that this
case does not occur.

 Finally, in the last case, there are four singular points and four
 irreducible components. Three of them have genus $1$ and one has
 genus zero. The component of genus zero necessarily contains all three
 singular points. A similar argument as in the previous case
 shows that the genus-$0$ component cannot be \'etale. This corresponds to
 Case (c).

\bigskip
{\bf Case 2}: $\gamma(\Yb)=2$ and $r_2=1$.\\ In this case the sum of
the genera of all components is equal to $1$. Therefore, there must be
a unique component of genus $1$, and all other components have genus
$0$. Let $W_1$ and $W_2$ be two components which meet in a singular
point $\yb$ such that $\sigma(\yb)\neq \yb$. Since $\sigma(W_i)=W_i$
for $i=1,2$ (Lemma \ref{lem:stable1}), $W_1$ and $W_2$ are \'etale
components and intersect each other in exactly three points (the
$G$-orbit of $\yb$).

If there are no further components, we are in Case (d). 
Assume that there exists a third component $W_3$. Let $T\subset \Yb$
be the maximal connected union of components which contains $W_3$ but
neither $W_1$ nor $W_2$. Then $T$ contains a unique component $W_0$
which meets either $W_1$ or $W_2$ in a singular point. The component
graph of $T$ is a tree, and we consider $W_0$ as its root. By the
stability condition, every tail component of $T$ must have positive
genus, so $T$ has a unique tail. If $W_0$ is not this tail, it has
genus $0$ and intersects the rest of $\Yb$ in exactly $2$ points.  This
contradicts the stability condition. We conclude that $\Yb$ has exactly three
components, of genus $g(W_1)=g(W_2)=0$ and $g(W_3)=1$. This is Case (e) of the
lemma. Now the proof is complete. 
\qed
\end{proof}

\subsection{A lower bound for $f_3$}\label{sec:picard_wild2}

We continue with the assumptions from the previous subsection. In addition, we
assume that $K$ is absolutely unramified. By this we mean that $\p=(3)$. Under
this assumption, we prove a lower bound for the conductor exponent
$f_3:=f_\p$. In fact, we will give a lower bound for $\epsilon$, where
$f_3=\epsilon+\delta$ is the decomposition from Proposition \ref{prop:fp}. If
$L/K$ is at most tamely ramified, then $\delta=0$ (Lemma \ref{lem:delta}). In
this case, our bounds are sharp.



Since $K$ is absolutely unramified, the third root of unity $\zeta_3\in L$ is
{\em not} contained in $K$. Therefore, there exists an element $\tau\in
\Gamma=\Gal(L/K)$ such that $\tau(\zeta_3)=\zeta_3^2$. Let $m$ be the order of
$\tau$. After replacing $\tau$ by a suitable odd power of itself we may assume
that $m$ is a power of $2$. We keep this notation fixed for the rest of this
paper. Recall that the semidirect product $\tilde{G}=G\rtimes \Gamma$ acts on
$\Yb$ in a natural way.

The following observation is crucial for our analysis of the conductor
exponent. 

\begin{lemma}\label{lem:zeta3}
  Let $W\subset\Yb$ be an \'etale component such that $\tau(W)=W$. Then inside
  the automorphism group of $W$ we have
\begin{equation}\label{eq:zeta3}
\tau\circ \sigma\circ\tau^{-1}=\sigma^2\neq \sigma.
\end{equation}
  In particular, $\tau|_W$ is nontrivial. 
\end{lemma}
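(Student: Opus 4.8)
The plan is to read off equation~\eqref{eq:zeta3} directly from the conjugation formula~\eqref{eq:picard_wild2}, which already records how $\Gamma$ acts on $G$ by conjugation in terms of the action on $\zeta_3$. Since we have fixed $\tau\in\Gamma$ with $\tau(\zeta_3)=\zeta_3^2$, the second case of~\eqref{eq:picard_wild2} gives $\tau\sigma\tau^{-1}=\sigma^2$ as an identity inside $\tilde G=G\rtimes\Gamma$. This is the entire content of the first displayed equation in the lemma, \emph{provided} we can transport this relation from $\Aut(Y_L)$ down to $\Aut_k(W)$.

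The one point that genuinely requires justification is the claim that the relation persists inside $\Aut_k(W)$, and in particular that $\sigma|_W\neq\sigma^2|_W$, i.e.\ the inequality $\sigma^2\neq\sigma$ in the statement. First I would note that, because $W$ is an \'etale component, the restriction $\sigma|_W\in\Aut_k(W)$ is nontrivial by Definition~\ref{def:etale}. Since $\sigma$ has order $3$ and $\sigma|_W\neq\mathrm{id}$, the subgroup $G$ acts faithfully on $W$; hence $\sigma|_W$ also has order $3$, so $\sigma|_W$, $\sigma^2|_W$ and $\mathrm{id}$ are three distinct automorphisms of $W$. In particular $\sigma^2|_W\neq\sigma|_W$, which is the asserted inequality. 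The hypothesis $\tau(W)=W$ guarantees that $\tau$ restricts to an element of $\Aut_k(W)$, so the conjugation relation $\tau\sigma\tau^{-1}=\sigma^2$ from~\eqref{eq:picard_wild2} may be restricted to $W$, yielding $\tau|_W\circ\sigma|_W\circ(\tau|_W)^{-1}=\sigma^2|_W$ as claimed in~\eqref{eq:zeta3}.

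Finally, for the concluding assertion that $\tau|_W$ is nontrivial, I would argue by contradiction: if $\tau|_W=\mathrm{id}$, then conjugation by $\tau|_W$ would be the identity automorphism of $\Aut_k(W)$, forcing $\sigma|_W=\tau|_W\circ\sigma|_W\circ(\tau|_W)^{-1}=\sigma^2|_W$. But we just established $\sigma|_W\neq\sigma^2|_W$, a contradiction. Hence $\tau|_W\neq\mathrm{id}$.

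I do not expect any serious obstacle here; the lemma is essentially a bookkeeping consequence of~\eqref{eq:picard_wild2} together with the faithfulness of the $G$-action on an \'etale component. The only subtlety worth stating carefully is that the \'etale hypothesis is exactly what makes $\sigma|_W$ of order $3$ (rather than trivial), which is what separates $\sigma|_W$ from $\sigma^2|_W$ and thereby powers both the displayed identity and the nontriviality of $\tau|_W$.
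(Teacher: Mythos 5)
Your proof is correct and is exactly the argument the paper intends: the paper's proof simply says the lemma ``follows immediately from Equation~\eqref{eq:picard_wild2} and Definition~\ref{def:etale}'', and your write-up spells out precisely those two ingredients (the conjugation formula restricted to $W$, and the \'etale hypothesis forcing $\sigma|_W$ to have order $3$ so that $\sigma|_W\neq\sigma^2|_W$). No gaps; you have just made explicit what the authors left implicit.
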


\begin{proof}
The statement follows immediately from Equation \eqref{eq:picard_wild2} and
Definition \ref{def:etale}.
\qed
\end{proof}

Despite its simplicity, Lemma \ref{lem:zeta3} has the following
striking consequence. Note that we consider potentially good but not
good reduction as bad reduction in this paper.

\begin{proposition}\label{prop:badat3}
  Assume that $\p=(3)$. Then every Picard curve $Y$ over $K$ has bad
  reduction.
\end{proposition}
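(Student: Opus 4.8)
The plan is to argue by contradiction, using Lemma \ref{lem:zeta3} to exploit the fact that the absolute unramifiedness of $K$ forces $\zeta_3\notin K$. So I would assume that $Y$ has good reduction over $K$ and deduce that $\Gamma$ must act nontrivially on the stable reduction $\Yb$, contradicting the defining property of good reduction.

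First I would translate ``good reduction over $K$'' into a statement about $\Yb$ and the $\Gamma$-action. If $Y$ admits a smooth proper model $\mathcal{Y}_0$ over $\OO_K$, then, since $g(Y)=3\geq 2$, the base change $\mathcal{Y}_0\otimes_{\OO_K}\OO_L$ is the stable model $\Y$, and its special fiber $\Yb$ coincides with the special fiber of $\mathcal{Y}_0$ (the residue field extension being trivial, as $L/K$ is totally ramified). In particular $\Yb$ is smooth and irreducible, so we are in Case (a) of Theorem \ref{thm:wild1}, with $\Yb$ a single component $W:=\Yb$ of genus $g(W)=3$. Moreover, because $\Y$ descends to the model $\mathcal{Y}_0$ over $\OO_K$, the group $\Gamma=\Gal(L/K)$ acts on $\Yb$ only through its (trivial) action on the residue field; hence good reduction over $K$ forces $\Gamma$ to act trivially on $\Yb$.

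Next I would pin down the type of $W$. By Definition \ref{def:etale} the component $W$ is either \'etale or inseparable. An inseparable component has genus zero, since $W\to Z=\bar{\phi}(W)$ would then be a purely inseparable homeomorphism onto the genus-$0$ curve $Z\subset\Xb$; this contradicts $g(W)=3$. Therefore $W$ is \'etale. Since $\Yb$ has the single component $W$, we trivially have $\tau(W)=W$, so Lemma \ref{lem:zeta3} applies and yields that $\tau|_W$ is nontrivial. Thus $\tau\in\Gamma$ acts nontrivially on $\Yb=W$, contradicting the triviality of the $\Gamma$-action established above. This contradiction shows that $Y$ cannot have good reduction over $K$, i.e.\ $Y$ has bad reduction.

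The main difficulty is conceptual rather than computational, and lies entirely in the first step: correctly identifying ``good reduction over $K$'' with the triviality of the $\Gamma$-action on the (necessarily smooth) stable reduction $\Yb$. Once this dictionary is in place, the existence of an element $\tau$ with $\tau(\zeta_3)=\zeta_3^2$ --- guaranteed precisely because $K$ is absolutely unramified, so $\zeta_3\notin K$ --- together with Lemma \ref{lem:zeta3} immediately obstructs good reduction.
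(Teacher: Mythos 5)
Your proof is correct and follows essentially the same route as the paper: both arguments reduce to the observation that, by Lemma \ref{lem:zeta3}, the element $\tau$ with $\tau(\zeta_3)=\zeta_3^2$ must act nontrivially on the (étale, since inseparable components have genus $0$) component of $\Yb$, which is incompatible with good reduction over $K$. You merely spell out in more detail the step the paper leaves implicit, namely that a smooth model over $\OO_K$ base-changes to the stable model over $\OO_L$ with trivial $\Gamma$-action on the special fiber.
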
 

\begin{proof}
Lemma \ref{lem:zeta3} implies that $Y$ acquires semistable reduction
only after passing to a ramified extension $L\ni\zeta_3$. Therefore
$Y/K$ does not have good reduction. The fact that $f_3\neq 0$ follows
from Proposition \ref{prop:fp}, together
with the fact that $\tau$ acts nontrivially on each irreducible
component of $\Yb$ (Lemma \ref{lem:zeta3}).
\qed
\end{proof}

In order to prove more precise lower
bounds for $f_3$, we need to analyze the action of $\sigma$ and $\tau$
on $\Yb$ in more detail.

\begin{lemma}\label{lem:genuscomp}
Let $W\subset\Yb$ be an \'etale component. Then one of the following
cases occurs:
\[
\begin{array}{c|c|c|c}
g(W) &\,r\,&\,h\,& g(W/\Gamma^0)\\
\hline
0&1&1& 0\\
1&1&2& 0\\
2&2& (1,1)& 1\\
3&1&4& 0
\end{array}\]
Here  $r$ is the number of ramification points of the
$G$-cover $W\to Z:=W/G$ and $h$ lists the set of lower jumps. The
fourth column gives an upper bound for the genus of $W/\Gamma^0$.
\end{lemma}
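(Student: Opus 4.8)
The plan is to exploit the Riemann--Hurwitz formula \eqref{eq:RH} for the wildly ramified $G$-cover $W\to Z=W/G$, which is the central tool. Since $p=3$ and $G$ is cyclic of order $3$, every ramification point is totally (wildly) ramified with a single lower jump $h_z\geq 1$ prime to $3$, so the smallest admissible jumps are $h_z\in\{1,2,4,\dots\}$. Writing \eqref{eq:RH} as $2g(W)-2 = -6 + \sum_z 2(h_z+1)$, i.e. $g(W) = -2 + \sum_z (h_z+1)$, I would first observe that since $Z$ has genus $0$, there must be at least one branch point ($r\geq 1$), for otherwise $W\to Z$ would be unramified and $W$ would have genus $0$ but be an \'etale cover of $\PP^1$, which is impossible. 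Thus the sum is nonempty. Since $g(W)\leq g(\Yb)\leq g(Y)=3$, the quantity $\sum_z(h_z+1)$ lies between $1$ and $5$; enumerating the ways to write $g(W)+2 = \sum_z(h_z+1)$ with each $h_z\in\{1,2,4,\dots\}$ prime to $3$ gives exactly the four rows of the table.

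Concretely, I would run through $g(W)=0,1,2,3$ in turn. For $g(W)=0$ we need $\sum_z(h_z+1)=2$, forcing a single branch point $r=1$ with $h=1$. For $g(W)=1$ we need $\sum_z(h_z+1)=3$, again forcing $r=1$ with $h=2$ (note $h=1$ twice would give the wrong total and $h=3$ is not prime to $3$). For $g(W)=2$ we need $\sum_z(h_z+1)=4$, which is achieved either by $r=2$ with jumps $(1,1)$ or by $r=1$ with a single jump $h=3$; the latter is excluded since $3$ is not prime to $p=3$, leaving $r=2$, $h=(1,1)$. For $g(W)=3$ we need $\sum_z(h_z+1)=5$, realized by $r=1$ with $h=4$, or $r=2$ with $(1,2)$, etc.; here I would need to argue why only $r=1$, $h=4$ appears in the table, presumably invoking additional constraints from the stable reduction (the number and configuration of branch points is pinned down by Theorem \ref{thm:wild1}, where a genus-$3$ component only arises in the smooth case (a) with a single totally ramified point).

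For the last column, the upper bound on $g(W/\Gamma^0)$, I would use the natural quotient map $W\to W/\Gamma^0$ together with the structure of the $\Gamma$-action. The key input is Lemma \ref{lem:zeta3}: since $W$ is \'etale and (after replacing $\Gamma^0=\Gamma$ and using that $\tau$ normalizes $G$) $\tau$ acts nontrivially on $W$ with $\tau\sigma\tau^{-1}=\sigma^2$, the full group $\tilde G$ acts on $W$, and the quotient $W/\Gamma^0$ is dominated by $W/\tilde G = Z/\langle\bar\tau\rangle$, a quotient of the genus-$0$ curve $Z$ and hence again of genus $0$ in most rows. I would compute $g(W/\Gamma^0)$ via Riemann--Hurwitz for the (tame, since $\abs{\Gamma}$ is a power of $2$ coprime to $3$) quotient $W\to W/\Gamma^0$, bounding the genus from above by taking the minimal possible ramification.

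The main obstacle will be the $g(W)=2$ row, where $W\to W/\Gamma^0$ drops the genus only to $1$ rather than $0$. Here I expect the honest content of the lemma to lie: I must show that the two ramification points of $W\to Z$ (with equal jumps $h=(1,1)$) are either fixed or swapped by $\tau$ in a way that forces $W/\Gamma^0$ to have genus exactly at most $1$, using the commutation relation \eqref{eq:zeta3} to control how $\Gamma^0$ permutes the fibers and the ramification data. The tameness of the $\Gamma$-action (as $m$ is a power of $2$) keeps the Riemann--Hurwitz bookkeeping clean, so once the combinatorics of the branch locus under $\tilde G$ is settled, the genus bound should follow by a direct computation; the delicate point is ruling out configurations that would make $g(W/\Gamma^0)$ larger than claimed.
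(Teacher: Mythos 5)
Your Riemann--Hurwitz enumeration of the first three columns is exactly the paper's starting point, and your list of candidate ramification data is correct, including the identification of the one extra case $g(W)=3$, $r=2$, $h=(1,2)$ that must be excluded. But your proposed exclusion of that case does not work: Theorem \ref{thm:wild1} only records the genera and intersection pattern of the components of $\Yb$; it says nothing about the number of branch points of $W\to Z$ on a genus-$3$ component, so invoking it here is circular (the lemma under discussion is precisely what pins down the ramification data). The paper's actual argument uses the element $\tau$ with $\tau\sigma\tau^{-1}=\sigma^2$ of $2$-power order $m$: since the two ramification points have \emph{different} lower jumps, $\tau$ must fix each of them, so $H=\langle\sigma,\tau\rangle$ acts fixing both; Lemma 2.6 of \cite{pries} then says $\gcd(h_i,m)$ equals the order of the prime-to-$3$ part of the centralizer of $H$ for each $i$, and $\gcd(1,m)=1\neq 2=\gcd(2,m)$ gives the contradiction. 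Nothing in your outline supplies a substitute for this step.

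You also misplace the difficulty in the fourth column. The $g(W)=2$ row, which you flag as the hard one, is immediate: Riemann--Hurwitz for any nontrivial quotient of a genus-$2$ curve forces $g(W/\langle\tau\rangle)\leq 1$, with no analysis of the branch locus needed. The delicate row is $g(W)=3$: with $\tau$ of order $m$ a power of $2$ and a fixed point at the unique ramification point of $\sigma$, Riemann--Hurwitz alone only gives $g(W/\langle\tau\rangle)=0$ once you know $m$ is large (for $m=2$ or $4$ a genus-$1$ quotient is numerically consistent). The paper again uses Pries's lemma, now with $h=4$, to show $\gcd(4,m)=m/2$ and hence $m=8$, after which Riemann--Hurwitz closes the case. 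Finally, your remark that $W/\Gamma^0$ is ``dominated by'' $W/\tilde G$ is backwards: the map goes $W/\Gamma^0\to W/\tilde G$, so $g(W/\tilde G)=0$ yields only a trivial lower bound on $g(W/\Gamma^0)$, not the upper bound you need. The missing ingredient throughout is the quantitative link between the lower jumps and the order of $\tau$ provided by \cite{pries}, Lemma 2.6.
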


\begin{proof}
Recall that we have assumed that the order $m$ of $\tau$ is a power of
$2$. 

The Riemann--Hurwitz formula (\ref{eq:RH}) immediately yields the
cases for $g(W), r$, and $h$ stated in the lemma, together with one
additional possibility: the curve $W$ has genus $3$ and $\phi:W\to
Z\cong \PP^1$ is branched at two points, with lower jump $1$ and $2$,
respectively.  We claim that this case does not occur.

Assume that $W$ is an \'etale component of $\Yb$ such that $\phi:W\to
Z$ is branched at $2$ points. Lemma \ref{lem:zeta3} implies that
$\tau$ acts nontrivially on $W$.  Since $\tau$ normalizes $\sigma$ and
the two ramification points have different lower jumps, it follows
that $\tau$ fixes both ramification points $w_i$ of $\phi$.  We
conclude that $H:=\langle \sigma, \tau\rangle$ acts on $W$ as a
nonabelian group of order $6$ fixing the $v_i$.

We write $h_i$ for lower jump of $w_i$.  Lemma 2.6 of \cite{pries}
implies that $\gcd(h_i, m)$ is the order of the prime-to-$3$ part of
the centralizer of $H$.  Since $\gcd(h_1, m)\neq \gcd(h_2, m)$ we
obtain a contradiction, and conclude that this case does not occur.

\bigskip
We compute an upper bound for the genus of $W/\langle \tau\rangle$ in
each of the remaining cases. This is also an upper bound for $g(W/\Gamma^0)$.

In the case that $g(W)=0$ there is nothing to prove.  In the case that
$g(W)=1$, the automorphism $\tau$ fixes the unique ramification point of $\phi$,
hence $g(W/\Gamma^0)=0.$

Assume that $g(W)=2$. The Riemann--Hurwitz formula immediately implies
that that $g(W/\langle \tau\rangle)\leq 1$.

Finally, we consider the case that $g(W)=3$, i.e.~$Y$ has potentially
good reduction. As before, we have that $\tau$ fixes the unique fixed
point of $\sigma$. Put $H=\langle \sigma, \tau\rangle$. Lemma
\ref{lem:zeta3} together with the assumption that the order $m$ of
$\tau$ is a power of $2$ implies that the order of the prime-to-$p$
centralizer of $H$ is $\gcd(h=4, m)=m/2$. It follows that $m=8$.
Since $\tau$ has at least one fixed point on $W$,
namely the point at $\infty$, the Riemann--Hurwitz formula implies that
$g(W/\langle \tau\rangle)=0$. 
This finishes the proof of the lemma.
\qed
\end{proof}





We have now all the necessary tools to prove our main theorem. 

\begin{theorem} \label{thm:main}
  Assume $\p=(3)$, and let $Y$ be a
  Picard curve over $K$. The conductor exponent $f_3$ of $Y/K$ satisfies
  \[
       f_3\geq 4.
  \]
  Moreover:
  \begin{enumerate}[(a)]
  \item
    If $f_3\leq 6$ then $Y$ achieves semistable reduction over a tamely
    ramified extension $L/K$. 
  \item If $f_3=4$ then we are  in Case (b) or Case (c)
    from Theorem \ref{thm:wild1}.
  \item
    If $f_3=5$ then we are  in Case (d) or in Case
    (e) of Theorem \ref{thm:wild1}.  
  \end{enumerate}
\end{theorem}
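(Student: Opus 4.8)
The plan is to run the whole argument through the decomposition $f_3=\epsilon+\delta$ of Proposition \ref{prop:fp}, in which $\delta\geq 0$ and
\[
   \epsilon = 6 - \dim H^1_{\et}(\Yb^0,\QQ_\ell) = 6 - 2\sum_{W} g(W) - \gamma(\Yb^0),
\]
where $\Yb^0=\Yb/\Gamma$ and $W$ runs over the irreducible components of $\Yb^0$. Since $\delta\geq 0$, the bound $f_3\geq 4$ and the two refinements all reduce to computing the quotient $\Yb^0=\Yb/\Gamma$ in each of the five cases of Theorem \ref{thm:wild1}: I need its component genera and its loop number $\gamma(\Yb^0)$.

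There are two inputs for this. The first is Lemma \ref{lem:genuscomp}, which (together with the nontriviality of $\tau$ on \'etale components from Lemma \ref{lem:zeta3}) shows that an \'etale component $W$ fixed by $\tau$ has $g(W/\Gamma)=0$ unless $g(W)=2$, in which case $g(W/\Gamma)\leq 1$. The second, which I expect to be the key structural point, is that because $\tau\sigma\tau^{-1}=\sigma^2\neq\sigma$, the automorphism $\tau$ acts as a \emph{transposition} on every $G$-orbit of three singular points: it cannot fix all three (that would force the points to be $\sigma$-fixed), and having order a power of $2$ it must fix one and interchange the other two. Running through the cases: in (a) and (c) every component of $\Yb^0$ has genus $0$ and $\Delta_{\Yb^0}$ is a tree, so $\epsilon=6$; in (d) and (e) the loop number $\gamma(\Yb)=2$ is carried by a $G$-orbit of three edges, and the transposition identifies two of them, so $\gamma(\Yb^0)=1$ while all component genera vanish, giving $\epsilon=5$; in (b) the genus-$2$ component contributes $g(W_1/\Gamma)\leq 1$ and nothing else does, so $\epsilon\in\{4,6\}$. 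In all cases $\dim H^1_{\et}(\Yb^0,\QQ_\ell)\leq 2$, whence $\epsilon\geq 4$ and $f_3\geq 4$.

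For part (a) I would argue by contraposition. If $Y$ does not become semistable over a tame extension then, by Lemma \ref{lem:delta}, the wild inertia $P=\Gamma^{0+}$---a $3$-group, as $p=3$---acts nontrivially on $\Yb$, so $\delta>0$ and I must upgrade this to $f_3\geq 7$. The tools are that the first ramification jump in the upper numbering is at least $1$, so $\Gamma^u=P$ for all $0<u\leq 1$, and that a nontrivial cyclic $3$-quotient of a component drops its genus to $0$ (by Riemann--Hurwitz: a wild $\ZZ/3$-cover has different exponent $\geq 4$ at each branch point, which rules out an intermediate genus). These give
\[
   \delta \;\geq\; \int_0^1 \big(6-2g(\Yb/P)\big)\,{\rm d}u,
\]
and combining the size of the genus drop with the per-case value of $\epsilon$ should yield $\epsilon+\delta\geq 7$. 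I expect the genuine obstacle to lie exactly here: the \emph{cheap} wild configurations---where $P$ destroys only a little genus, for instance acting wildly on just the genus-$1$ component of a type-(b) curve, which naively would give $\epsilon=4$, $\delta=2$, $f_3=6$---must be excluded. This forces one to coordinate the action of $P$ with the $\langle\sigma\rangle$-action on each component, using that $\langle\sigma,P\rangle$ is a $3$-group acting on a curve of small genus and that $\Yb$ is stable, so as to produce either a bigger genus drop (larger $\delta$) or a different reduction type (larger $\epsilon$).

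Parts (b) and (c) then follow formally. If $f_3\leq 6$ then part (a) and Lemma \ref{lem:delta} give $\delta=0$, so $f_3=\epsilon$, and the case-by-case values above finish the proof: $f_3=4$ needs a component of $\Yb^0$ of genus $1$, hence (by the genus table) a genus-$2$ component of $\Yb$, hence Case (b)---in particular Case (b) or (c); while $f_3=5$ needs $\gamma(\Yb^0)=1$, which only arises from a $G$-orbit of three edges, hence Case (d) or Case (e).
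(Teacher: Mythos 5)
Your overall strategy coincides with the paper's: write $f_3=\epsilon+\delta$ as in Proposition \ref{prop:fp}, bound $\epsilon$ from below by computing $\Yb^0=\Yb/\Gamma^0$ case by case through Theorem \ref{thm:wild1} using Lemma \ref{lem:genuscomp}, and observe that $\tau$ acts on each $G$-orbit of three nodes by fixing exactly one and transposing the other two; this last point is exactly the paper's argument in Cases (d) and (e). One step of your case analysis is wrong, however. In Case (c) you assert that every component of $\Yb^0$ has genus $0$, hence $\epsilon=6$. But Lemma \ref{lem:genuscomp} (via Lemma \ref{lem:zeta3}) only controls components \emph{fixed} by $\tau$: since $\tau$ has $2$-power order it may fix just one of the three genus-$1$ components and swap the other two, and the swapped pair maps to a single component of $\Yb^0$ whose stabilizer need not contain any element inverting $\zeta_3$, so its genus can remain $1$ and $\epsilon=4$ can occur in Case (c). This is precisely why part (b) of the theorem must allow Case (c); your stronger intermediate claim that $f_3=4$ forces Case (b) is not established, although the weaker conclusion you finally state still follows.

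The genuine gap is part (a). You correctly reduce it to showing that nontrivial wild inertia forces $f_3\geq 7$, but you do not carry this out: you write that the combination ``should yield $\epsilon+\delta\geq 7$'' and explicitly flag the cheap wild configurations (e.g.\ $\epsilon=4$ with $\delta=2$) as the obstacle without excluding them. Moreover, the one quantitative input you do assert is false: the first positive break of $\Gamma^u$ in the upper numbering is the image under the Herbrand function of the first lower break $h_1$, namely $h_1/[\Gamma^0:\Gamma^{0+}]$, which is strictly less than $1$ whenever the tame quotient is nontrivial, so the inequality $\delta\geq\int_0^1\bigl(6-2g(\Yb/P)\bigr)\,{\rm d}u$ does not hold in general. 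The paper instead obtains (a) directly from the computed values $\epsilon\in\{4,5,6\}$ together with Lemma \ref{lem:delta}, i.e.\ by arguing that $f_3\le 6$ forces $\delta=0$ and hence tameness; if you prefer your contrapositive route you must actually prove the lower bounds on $\delta$ (coordinated with the value of $\epsilon$ in each reduction type) that you have only gestured at.
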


\begin{proof} 
We use the assumptions and notations from the beginning of
\S~\ref{sec:picard_wild2}. Recall that the inertia subgroup
$\Gamma^0\subset\Gamma:=\Gal(L/K)$ acts on the geometric special fiber
$\Yb$ of the stable model of $Y_L$ and that .the quotient
$\Yb^0=\Yb/\Gamma^0$ is again a semistable curve.

\bigskip
{\bf Claim}:  We have that
\begin{equation} \label{eq:main1}
   \dim\, H^1_\et(\Yb^0,\QQ_\ell) \leq 2.
\end{equation}
Note that \eqref{eq:main1}, together with \eqref{eq:fp1} and \eqref{eq:fp2},
immediately implies the first statement $f_3\geq 4$ of the theorem. 
 
Recall from (\ref{eq:H11}) and (\ref{eq:gamma1}) that the contribution
of a smooth component $W$ of $\Yb^0$ to $ \dim\,
H^1_\et(\Yb^0,\QQ_\ell)$ is $2g(W)$. The contribution of
$H^1(\Delta_{\Yb})$ to $ \dim\, H^1_\et(\Yb^0,\QQ_\ell)$ is
$\gamma(\Yb^0)$, which is less than or equal to $g(\Yb^0)$.

Let $W\subset\Yb$ be an irreducible component, and denote by $W^0\subset\Yb_0$
its image in $\Yb^0$. Clearly, $g(W^0)\leq g(W)$. Moreover, if $\tau(W)=W$
then Lemma \ref{lem:genuscomp} shows that $g(W^0)\leq 1$. 

Let us consider each case of Theorem \ref{thm:wild1} separately. In Case (a),
$\Yb$ is smooth and irreducible of genus $3$. Then $\Yb^0$ is also smooth
and irreducible, and Lemma \ref{lem:genuscomp} shows that $g(\Yb^0)=0$. So in
Case (a) we have proved $\dim\, H^1_\et(\Yb^0,\QQ_\ell) = 0$, which is
strictly stronger than \eqref{eq:main1}. Similarly, in Case (b) Lemma
\ref{lem:genuscomp} shows that $\Yb^0$ consists of two irreducible components
which meet in a single point. One of these components has genus zero, the
other one has genus $\leq 1$. Therefore, \eqref{eq:main1} holds in Case
(b). 
  
Assume that we are in Case (c). Let $W_1,W_2,W_3$ denote the three components
of genus $1$, and $W_i^0$, $i=1,2,3$, their images in $\Yb^0$. Since the order
of $\tau$ is a power of two, $\tau$ fixes exactly one of these
components (say $W_1$), or all three. In the first case, $g(W_1^0)=0$ by
Lemma \ref{lem:genuscomp}, and $W_2^0=W_3^0$. Therefore, $\dim\,
H^1_\et(\Yb^0,\QQ_\ell)=1$. In the second case, $g(W_i^0)=0$ for $i=1,2,3$,
and $\dim\, H^1_\et(\Yb^0,\QQ_\ell)=0$. In both cases, \eqref{eq:main1}
holds. 

Now assume that we are in Case (d). The action of $\Gamma^0$ must fix
both components $W_1,W_2$, since $g(W_1)\neq g(W_2)$. Lemma
\ref{lem:genuscomp} shows that $g(W_i/\Gamma^0)=0$, for $i=1,2$. Also,
$\tau$ permutes the three singular points of $\Yb$. But these points
form one orbit under the action of $G$. Hence it follows from
\eqref{eq:zeta3} that $\tau$ fixes exactly one singular point and
permutes the other two. We conclude that the curve $\Yb^0$ has two
smooth components of genus $0$ which meet in at most two points. We
conclude that $\dim\, H^1_\et(\Yb^0,\QQ_\ell)\leq 1$. A similar
analysis shows that the same conclusion holds in Case (e). This proves
the claim (\ref{eq:main1}).

While proving the claim, we have shown  the following stronger conclusion:
\begin{equation} \label{eq:main2}
  \dim\, H^1_\et(\Yb^0,\QQ_\ell) \in 
    \begin{cases}
       \{0\}, & \text{Case (a),} \\
       \{0,2\}, & \text{Case (b), (c),} \\
       \{0,1\}, & \text{Case (d), (e).}
   \end{cases}
\end{equation}
It follows that $\epsilon=6$ in Case (a), $\epsilon\in\{4,6\}$ in the
Cases (b) and (c), and $\epsilon\in\{5,6\}$ in the Cases (d) and (e). 

The remaining statement that $Y$ acquires stable reduction over a
tamely ramified extension $L$ of $K$ in the case that $f_3\leq 6$
follows from Lemma \ref{lem:delta}.
\qed
\end{proof}

\begin{corollary}
  If $\p=(3)$ and $Y$ has potentially good reduction, then $f_3\geq 6$.
\end{corollary}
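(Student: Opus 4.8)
The plan is to specialize the analysis already carried out in the proof of Theorem~\ref{thm:main} to the situation of potentially good reduction. Recall that potentially good reduction corresponds exactly to Case~(a) of Theorem~\ref{thm:wild1}, in which $\Yb$ is smooth and irreducible of genus $3$. Indeed, in the other four cases $\Yb$ is a stable curve with at least one singular point, so the Jacobian of $Y$ has nontrivial toric or additive part after semistable reduction, and $Y$ does not have potentially good reduction. Thus the first step is to observe that the hypothesis of potentially good reduction forces us into Case~(a).

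Once we are in Case~(a), I would invoke the computation \eqref{eq:main2} established inside the proof of Theorem~\ref{thm:main}, which asserts that in Case~(a) we have $\dim\, H^1_\et(\Yb^0,\QQ_\ell)=0$. By the definition \eqref{eq:fp2} of $\epsilon$, this yields
\[
   \epsilon = 6 - \dim\, H^1_\et(\Yb^0,\QQ_\ell) = 6.
\]
Since $f_3=\epsilon+\delta$ by Proposition~\ref{prop:fp}, and since $\delta\geq 0$ by its definition \eqref{eq:fp3} (the integrand $6-2g(\Yb^u)$ is nonnegative because $g(\Yb^u)\leq g(\Yb)=3$), we conclude immediately that $f_3=6+\delta\geq 6$.

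The main conceptual point, and the only place where real content is used, is the vanishing $\dim\, H^1_\et(\Yb^0,\QQ_\ell)=0$ in Case~(a). This rests on Lemma~\ref{lem:genuscomp}, specifically the final row of its table, which shows that the genus-$3$ \'etale component $W=\Yb$ satisfies $g(W/\Gamma^0)=0$: the element $\tau$ fixes the unique branch point of $W\to Z$ and, together with $\sigma$, acts so that the quotient $W/\langle\tau\rangle$ has genus $0$. Since $\Yb^0=\Yb/\Gamma^0$ is then a smooth irreducible curve of genus $0$, its first \'etale cohomology vanishes. So the corollary is essentially a direct reading-off of the Case~(a) bound, and I do not expect any serious obstacle beyond correctly citing the already-established facts. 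The statement is therefore immediate once the reduction to Case~(a) is made explicit.
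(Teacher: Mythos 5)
Your proof is correct and follows exactly the route the paper intends (the corollary is stated without proof precisely because it is read off from \eqref{eq:main2}, which gives $\epsilon=6$ in Case~(a), combined with $f_3=\epsilon+\delta$ and $\delta\geq 0$). One small quibble: your justification for the reduction to Case~(a) via the Jacobian is off --- in Cases~(b) and~(c) the component graph is a tree, so the Jacobian has potentially good reduction with no toric part --- but this does not matter, since potentially good reduction of the \emph{curve} means by definition that the stable reduction $\Yb$ is smooth, which is Case~(a).
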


\subsection{Examples} \label{subsec:wild_examples}

In this section we discuss two explicit examples of Picard curves over
$\QQ_3^{\rm nr}$ in some detail. These examples show, among other things, that
the lower bounds for $f_3$ given by Theorem \ref{thm:main} are sharp. 

Let us fix some notation. We set $K:=\QQ_3^{\rm nr}$.  Given a suitable finite
extension $L/K$, we denote by $v_L$ the unique extension of the $3$-adic
valuation to $L$ (which is normalized such that $v_L(3)=1$). We let $F(X_L)$
denote the function field of $X_L:=\PP^1_L$, and identify $F(X_L)$ with the
rational function field $L(x)$.  For a Picard curve $Y$ over $K$ given by
$y^3=f(x)$ for a quartic polynomial $f\in K[x]$ the function field $F(Y_L)$ of
$Y_L$ is the degree-$3$ extension of $F(X_L)$ obtained by adjoining the
function $y$.

Let $\X$ be a semistable model of $X_L$, and let
$Z_1,\ldots,Z_n\subset\Xb:=\X\otimes\FF_L$ denote the irreducible components
of the special fiber. Since each $Z_i$ is a prime divisor on $\X$, it gives
rise to a discrete valuation $v_i$ on $F(X_L)$, extending $v_L$. It has the
property that the residue field of $v_i$ can be naturally identified with the
function field of $Z_i$. Since $X_L$ is simply a projective line and $\X$ is a
semistable model, the valuations $v_i$ have a simple description, as
follows. For all $i$, there exists a coordinate $x_i\in F(X_L)$ such that
$v_i$ is the Gauss valuation on $F(X_L)=L(x_i)$ with respect to $x_i$.  The
coordinate $x_i$ is related to $x$ by a fractional linear transformation
\[
     x = \frac{a_ix_i+b_i}{c_ix_i+d_i},
\]
with $a_id_i-b_ic_i\neq 0$.  It can be shown that the model $\X$ is uniquely
determined by the set $\{v_1,\ldots,v_n\}$, see \cite{superell} or
\cite{JulianDiss}.

Let $\Y$ denote the normalization of $\X$ inside the function field
$F(Y_L)$. Then $\Y$ is a normal integral model of $Y_L$. In general, $\Y$ has
no reason to be semistable, and it is not clear in general how to describe its
special fiber $\Yb:=\Y\otimes k$. However, each irreducible component
$W\subset\Yb$ corresponds again to a discrete valuation $w$ on $F(Y_L)$
extending $v_L$, such that the residue field of $w$ is the function field of
$W$. It can be shown that this gives a bijection between the irreducible
components of $\Yb$ and the set of discrete valuations on $F(Y_L)$ extending
one of the valuations $v_i$ (see e.g.\ \cite{JulianDiss}, \S~3). In many
situations, the knowledge of all extensions of the $v_i$ to $F(Y_L)$ will give
enough information to decide whether the model $\Y$ is semistable and to
describe its special fiber.

We need one more piece of notation. For $m>1$ prime to $3$ we set
\[
      L_m:=K(\pi)/K
\]
where $\pi^m=-3$. Then $L_m/K$ is a tamely ramified Galois extension of degree
$m$. The Galois group $\Gamma:={\rm Gal}(L_m/K)$ is cyclic and generated by
the element $\tau\in\Gamma_m:=\Gal(L_m/K)$ determined by
\[
    \tau(\pi)=\zeta_m\pi,
\]
where $\zeta_m\in K$ is a primitive $m$th root of unity (which exists because
$k$ is algebraically closed). Note also that $L_m$ contains the third root of
unity
\[
      \zeta_3 := \frac{-1+\pi^{m/2}}{2}.
\]
We remark that the choice of $\tau$ and $m$ agrees with the notation chosen in
\S~\ref{sec:picard_wild2}

\begin{example}\label{exa:potgood}

Let $Y$ be the Picard curve over $K$ given by the equation
\begin{equation} \label{eq:exa1.1}
     y^3 = x^4+1.
\end{equation}
We claim that $Y$ has potentially good reduction, which is attained
over the tame extension $L:=L_8=K(\pi)/K$, with $\pi^8=-3$. 

To prove this, we apply the coordinate changes
\[
       x = \pi^3x_1, \quad y = 1 + \pi^4 y_1
\]
to \eqref{eq:exa1.1}. After a brief calculation, we obtain the new equation
\begin{equation} \label{eq:exa1.2}
   y_1^3 - \pi^4 y_1^2 - y_1 = x_1^4. 
\end{equation}
Equation \eqref{eq:exa1.2} is equivalent to \eqref{eq:exa1.1} in the sense that
it defines a curve over $K$ which is isomorphic to $Y$. Also,
\eqref{eq:exa1.2} defines an integral model $\Y$ of $Y_L$. Its special fiber
is the curve over $k=\bar{\FF}_3$ given by the (affine) equation
\[
   \Yb:\; y_1^3 - y_1 = x_1^4.
\]
This is a smooth curve of genus $3$. It follows that $\Y$ has good reduction
over $L$, as claimed.

Since $Y$ acquires stable reduction over a tame extension $L/K$, Lemma
\ref{lem:delta} implies that $f_3=\epsilon$. Equations (\ref{eq:fp2})
and (\ref{eq:main2}) imply that
$f_3=6$.  

For completeness, we compute the action of $\Gamma^0=\gen{\tau}$ on
$\Yb$ explicitly. We consider $\tau$ as an automorphism of the
structure sheaf of $\Y$. By definition, we have
\[
   \tau(\pi) =\zeta_8\pi, \quad \tau(x) = x,\quad \tau(y) = y.
\]
It follows that
\[
    \tau(x_1) =\zeta_8^5x_1, \quad \tau(y_1) = - y_1.
\]
This describes $\tau|_{\Yb}$ as an automorphism of $\Yb$ of order
$8$, as expected from the proof of Lemma \ref{lem:genuscomp}. 
\end{example}

\begin{example} \label{exa:f_3=4I}
  Let $Y/K$ be the Picard curve
  \begin{equation} \label{eq:exa2.1}
      Y:\; y^3 = f(x) := 3x^4+x^3-54.
  \end{equation}
  We claim that $Y$ has semistable reduction over the tame extension
  $L:=L_4/K$. Moreover, the stable reduction $\Yb$ is as in Case (b) of
  Theorem \ref{thm:wild1}, and $f_3=4$. 

  First we define a semistable model $\X$ of $X_L:=\PP^1_L$ by specifying two
  discrete valuations $v_1,v_2$ on $F(X_L)$ which extend $v_L$. We  then
  show that the normalization $\Y$ of $\X$ in $F(Y_L)$ is the stable model of
  $Y_L$, and  determine its special fiber $\Yb$ and the action of the
  inertia group of $L/K$ on $\Yb$.  

  The valuation $v_1$ is defined as the Gauss valuation on $F(X_L)=F(x_1)$
  with respect to the coordinate $x_1$, which is related to $x$ by 
  \begin{equation} \label{eq:exa2.2}
        x = \pi^2 x_1.
  \end{equation}
  We claim that $v_1$ has a unique extension $w_1$ to $F(Y_L)$ that is
  unramified. To show this, we need a so-called {\em $p$-approximation} of $f$
  with respect to $v_1$, see \cite{superp}. In fact, we can write 
  \[
      f = \pi^6\big(x_1^3 +\pi^6(2-x_1^4)\big).
  \]
  Here we have used the relation $\pi^4=-3$. This suggests the coordinate
  change
  \begin{equation} \label{eq:exa2.2a}
      y = \pi^2(x_1 + \pi^2 y_1).
  \end{equation}
  After a short calculation we obtain a new equation for $Y_L$:
  \begin{equation} \label{eq:exa2.3}
    y_1^3 - \pi^2x_1y_1^2 - x_1^2y_1 = 2-x_1^4.
  \end{equation}
  If we consider \eqref{eq:exa2.3} as defining an affine curve over $\OO_L$,
  its special fiber is the affine curve over $k$ with equation
  \begin{equation} \label{eq:exa2.4}
      \yb_1^3-\xb_1^2\yb_1 = -1-\xb_1^4.
  \end{equation}
  In fact, \eqref{eq:exa2.4} defines an irreducible affine curve with a cusp
  singularity in $(\xb_1,\yb_1)=(0,-1)$. It follows that the inverse image in
  $\Yb$ of $\Xb_1$ is an irreducible component $W_1$ of multiplicity one
  birationally equivalent to the curve given by \eqref{eq:exa2.4}. To compute
  the geometric genus of $W_1$ we substitute $\yb_1=-1+\xb_1\zb_1$ into
  \eqref{eq:exa2.4} and obtain the Artin--Schreier equation
  \begin{equation} \label{eq:exa2.4b}
    \zb_1^3-\zb_1 = -\xb_1^{-1}-\xb_1.
  \end{equation}
  Using the Riemann--Hurwitz formula, one sees that $W_1$ has geometric genus
  $2$. 

  The valuation $v_2$ of $F(X_L)$ corresponds to the choice of the coordinate
  $x_2$ given by
  \begin{equation} \label{eq:exa2.5}
      x = 3(1 + \pi x_2).
  \end{equation}
  After a short calculation we can write
  \begin{equation} \label{eq:exa2.6}
       f = 3^3\big(\,(-1+\pi x_2)^3 - 2\pi^6x_2^2 + 3^2(\ldots)\,\big).
  \end{equation}
  This suggests the change of coordinate
  \begin{equation}  \label{eq:exa2.7}
      y = 3\big(\,(-1+\pi x_2) + \pi^2 y_2\,\big).
  \end{equation}
  Plugging in \eqref{eq:exa2.7} into \eqref{eq:exa2.1} and using
  \eqref{eq:exa2.6} we arrive at the equation
  \begin{equation} \label{eq:exa2.8}
     y_2^3 +\pi^2(-1+\pi x_2)y_2^2 - (-1+\pi x_2)^2y_2 = -2 x_2^2 +\pi^2(\ldots).
  \end{equation}
  Reducing \eqref{eq:exa2.8} modulo $\pi$ we obtain the irreducible equation
  \begin{equation} 
     \yb_2^3 -\yb_2 = \xb_2^2,
  \end{equation}
  which defines a curve of genus $1$. It follows that the inverse image of
  $\Xb_2$ in $\Yb$ is an irreducible projective curve $W_2$ of geometric genus
  $1$. 

  So $\Yb$ consists of two irreducible components $W_1$ and $W_2$ of
  geometric genus $2$ and $1$. On the other hand, $\Y_s$ is known to have
  arithmetic genus $3$. By a standard argument (see e.g.\ ) we can conclude
  that $W_1$, $W_2$ are smooth and meet transversely in a single point. This
  shows that $Y$ has semistable reduction over the tame extension $L_4/K$,
  with a stable model of type (b).

  \bigskip Let us try to analyze the action of $\Gamma=\Gamma^0=\gen{\tau}$ on
  $\Yb$. By definition, $\tau(\pi) =\zeta_4\pi$, $\tau(x)=x$ and
  $\tau(y)=y$. 
  From \eqref{eq:exa2.2} and \eqref{eq:exa2.2a} we deduce that
  $\tau|_{W_1}$ is given by
  \[
      \tau(\xb_1) = -\xb_1, \quad \tau(\yb_1) = \yb_1,\quad
      \tau(\zb_1)=-\zb_1.
  \]
  From \eqref{eq:exa2.5} and \eqref{eq:exa2.7} we see that
  \[
     \tau(\xb_2) = \zeta_4^3\xb_2, \quad  \tau(\yb_2) = -\yb_2.
  \]
  It follows that the curve $\Yb^0:=\Yb/\Gamma^0$ has two irreducible smooth
  components, $W_1^0=W_1/\Gamma^0$ and $W_2^0=W_2/\Gamma^0$, meeting in a
  single point. An easy calculation (compare with the proof of Lemma
  \ref{lem:genuscomp}) shows that $g(W_1^0)=1$ and $g(W_2^0)=0$. It follows
  that $g(\Yb^0)=1$ and $\dim\, H^1(\Delta_{\Yb^0})=0$ and hence $f_3 = 6- 2 =
  4$.  
\end{example}

\begin{remark}
  The two examples discussed above are quite special. Typically, the extension
  $L/K$ needs to be wildly ramified, and have rather large degree. It is then
  hard (and often practically impossible) to do computations as above by
  hand. Most of the examples  in \cite{MichelDiss} and this paper have
  been computed with the help of (earlier versions of) Julian R\"uth's Sage
  packages {\tt mac lane} and {\tt completion} (available at
  \url{https://github.com/saraedum}), and the algorithms from 
  \cite{superell} and \cite{superp}.
\end{remark}


\section{The tame case: $p\neq 3$}

\label{sec:tame}

In this section we assume that the residue characteristic $p$ of our ground
field $K$ is different from $3$. In this case it is much easier to analyze the
semistable reduction of Picard curves and to compute the conductor exponent
$f_\p$ than for $p=3$. The theoretical background for this are the {\em
  admissible covers}, see \cite{HarrisMumford82}, \cite{liu}, \S~10.4.3, or
\cite{tame}. In the case of superelliptic curves the computation of $f_\p$ has
already been described in detail in \cite{superell}, hence we can be much
briefer than in the previous section.

\subsection{The stable model}

Let $K$ be as in \S~\ref{setup}, with $p\neq 3$. Let
$Y/K$ be a Picard curve, given by an equation
\[
    Y:\; y^3 = f(x),
\]
where $f\in K[x]$ is a separable polynomial of degree $4$. Let $L_0/K$
denote the splitting field of $f$. Let $L/L_0$ be a finite extension
with ramification index $3$ such that $L/K$ is a Galois
extension. Then \cite{superell}, Corollary 4.6 implies that $Y$
acquires semistable reduction over $L$.

We note in passing that $L/K$ is tamely ramified unless $p=2$. This follows
from the definition of the Galois extension $L_0/K$, whose degree divides
$4!=24$.  

A semistable model $\Y$ of $Y_L$ may be constructed as follows, see
\cite{superell}, \S~4. Let $D\subset X=\PP_K^1$ denote the branch divisor of the
cover $\phi:Y\to X$, consisting of the set of zeros of $f$ and
$\infty$. Since $L$ contains the splitting field of $f$, the pullback
$D_L\subset Y_L$ consists of $5$ distinct $L$-rational points. Let $(\X,\D)$
denote the {\em stably marked model} of $(X_L,D_L)$. By this we mean that $\X$
is the minimal semistable model of $X_L$ with the property that the schematic
closure $\D\subset\X$ of $D_L$ is \'etale over $\Spec\OO_L$ and contained
inside the smooth locus of $\X\to\Spec\OO_L$. Let $\Xb:=\X\otimes\FF_L$ denote
the special fiber of $\X$ and $\Db=\D\cap\Xb$ the specialization of
$D_L$. Then $(\Xb,\Db)$ is a stable $5$-marked curve of genus zero. This means
that $\Xb$ is a tree of projective lines, where every irreducible component
has at least three points which are either marked (i.e. lie in the support of
$\Db$) or are singular points of $\Xb$. 

Let $\Y$ denote the normalization of $\X$ with respect to the cover $Y_L\to
X_L$. Theorem 3.4 from \cite{superell} shows that $\Y$ is a quasi-stable model
of $Y_L$. A priory, it is not clear whether $\Y$ is the stable model of
$Y$. The following case-by-case analysis will show that it is. 

We will use the fact that the natural map $\Y\to\X$ is an {\em admissible cover}
with branch locus $\D$. In particular, the induced map
\[
   \phib:\Yb\to\Xb
\]
between the special fiber of $\Y$ and of $\X$ is generically \'etale
and identifies $\Xb$ with the quotient scheme $\Yb/G$.  

We describe the restriction of the map $\phib$ to an irreducible
component $\Xb_i$ of $\Xb$.  Without loss of generality we may assume
that $K$ (and hence $L$) contains a primitive $3$rd root of unity
$\zeta_3$, which we fix. For each branch point $\xi$ of
$\phib|_{\Xb_i}$ the {\em canonical generator of inertia} $g\in G$ is
characterized by $g^\ast u\equiv \zeta_3 u\pmod{u^2}$, where $u$ is a
local parameter at $\phib|_{\Xb_i}^{-1}(\xi_i)$. A branch point of
$\phib|_{\Xb_i}$ is either the specialization of a branch point of
$\phi$ or a singular point of $\Xb$. 

Assume that $\xi$ is the specialization of a branch point. An
elementary calculation shows that the canonical generator of inertia is
$\sigma$ of $\xi$ is the specialization of $\infty$ and $\sigma^2$
otherwise. Now let $\xi$ be a singularity of $\Xb$, and denote the
irreducible components intersecting in $\xi$ by $\Xb_1$ and $\Xb_2$.
Then the canonical generators $g_i$ of the restrictions
$\phib|_{\Xb_i}$ at $\xi$ satisfy
\[
g_1=g_2^{-1}.
\]
(This last condition says that $\phib$ is an admissible cover.)

The upshot is that the map $\phib:\Yb\to\Xb$ is completely determined
and easily described by the stably marked curve $(\Xb,\Db)$.

The following lemma lists the 5 possibilities for $\Xb$.  Note that we
need to distinguish between $\infty$ and the other $4$ branch
points. The proof is elementary, and therefore omitted.

\begin{lemma}\label{lem:tame}
With assumptions and notations as in the beginning of the section, we
have the following $5$ possibilities for $\Xb$.
\begin{enumerate}[(a)]
\item The curve $\Xb$ is irreducible.
\item The curve $\Xb$ consists of two irreducible components
  $\Xb_1$ and $\Xb_2$. Three of the branch points of $\phi$ including
  $\infty$ specialize to $\Xb_1$, the other two to $\Xb_2$.
\item The curve $\Xb$ consists of three irreducible components
  $\Xb_1$, $\Xb_2$, and $\Xb_3$, where $\Xb_1$ and $\Xb_3$ intersect
  $\Xb_2$. The branch point $\infty$ specializes to $\Xb_2$, two other
  branch points specialize to $\Xb_1$, and two to $\Xb_3$.
\item  The curve $\Xb$ consists of two irreducible components
  $\Xb_1$ and $\Xb_2$. Three of the branch points of $\phi$ different from
  $\infty$ specialize to $\Xb_1$, the other two to $\Xb_2$.
\item The curve $\Xb$ consists of three irreducible components
  $\Xb_1$, $\Xb_2$, and $\Xb_3$, where $\Xb_1$ and $\Xb_3$ intersect
  $\Xb_2$. Two branch points including $\infty$ specialize to $\Xb_1$,
  two other branch points specialize to $\Xb_3$, and the last one to
  $\Xb_2$.
\end{enumerate}
\end{lemma}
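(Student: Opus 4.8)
Let me sketch how I would prove this classification. The key fact I would rely on is that $(\Xb,\Db)$ is a \emph{stable $5$-marked curve of genus zero}, as established in the paragraph preceding the lemma. This means $\Xb$ is a tree of projective lines in which every component carries at least three \emph{special points} (marked points or nodes), and there are exactly $5$ marked points coming from $D_L$. The entire classification is therefore a purely combinatorial enumeration of stable $5$-pointed trees of $\PP^1$'s, together with bookkeeping of which marked point is $\infty$.

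The plan is as follows. First I would record the combinatorial constraints. If $\Xb$ has $s$ components, then its dual graph is a tree with $s$ vertices and $s-1$ nodes; since it is a tree of lines, each node contributes $2$ special points (one on each incident component) and each marked point contributes $1$. Stability forces each vertex to have valence $\geq 3$ in the sense of total special points. A standard count for a stable $n$-pointed genus-$0$ tree shows the number of components is at most $n-2$; with $n=5$ this gives $s\leq 3$, so I only need to handle $s=1,2,3$. For $s=1$ there is nothing to distribute and we land in case (a). For $s=2$ there is a single node, and the five marked points split as a partition of $5$ into two blocks each of size $\geq 2$ (each component needs $\geq 3$ special points, one of which is the node); the only such partition is $3+2$. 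For $s=3$ the tree is necessarily a path $\Xb_1$--$\Xb_2$--$\Xb_3$ (a star on three vertices is the same path), so the middle component $\Xb_2$ already has $2$ nodes and needs at least $1$ marked point, while the two tails each have $1$ node and need $\geq 2$ marked points; the marked points distribute as $2+1+2$.

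Next I would incorporate the distinguished role of $\infty$. The only genuine refinement beyond the marked-point counts above comes from tracking whether $\infty$ lies on a particular component, which is what separates case (b) from case (d) (both have the $3+2$ partition) and case (c) from case (e) (both have the $2+1+2$ path). In the $3+2$ case, $\infty$ lies either on the component carrying three marked points (case (b)) or on the component carrying two (case (d)). In the $2+1+2$ path case, $\infty$ lies either on one of the tails (case (e), where it sits with one other point on a tail) or on the middle component $\Xb_2$ (case (c), where $\infty$ is the lone marked point on the center). Enumerating these positions against the symmetry of the configurations yields exactly the five listed possibilities.

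The only point requiring a little care — and the place I would expect the subtlety to lie — is verifying that each combinatorial type is actually \emph{realizable} as the stably marked reduction of some Picard curve, rather than merely consistent with the tree axioms, and conversely that no two of the listed types secretly coincide under relabeling. Realizability is really a statement about which stable marked trees arise, and since the marking data $D_L$ (the four roots of $f$ together with $\infty$) can be positioned freely by varying $f$, every stable $5$-marked genus-$0$ tree does occur; the asymmetry between $\infty$ and the remaining four roots is exactly what produces the split into five rather than fewer cases. Because the authors note the proof is elementary and omit it, I would keep this enumeration brief, emphasizing the $s\leq 3$ bound and the $\infty$-placement bookkeeping as the two substantive steps.
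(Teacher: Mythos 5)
Your proposal is correct, and it is exactly the elementary combinatorial argument the authors have in mind (the paper explicitly omits the proof as elementary): the stability count $5+2(s-1)\geq 3s$ gives $s\leq 3$, the marked points then distribute as $5$, $3+2$, or $2+1+2$, and tracking the position of $\infty$ splits the latter two shapes into cases (b)/(d) and (c)/(e) respectively. Note only that the lemma asserts exhaustiveness of the list, so your realizability discussion, while harmless, is not needed for the statement as given.
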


The following result immediately follows from the possibilities for
$\Xb$, together with the fact that $\phib$ is an admissible cover.

\begin{theorem} \label{thm:tame1}
  Let $K$ be as in \S\ref{setup}, with $p\neq 3$. Let $Y$ be
  a Picard curve over $K$, $L/K$ a finite Galois extension over which $Y$ has
  semistable reduction. Let $\Y$ denote the stable model of $Y_L$ over $\OO_L$
  and $\Yb:=\Y\otimes k$ the special fiber. Then $\Yb$ is as in one of the
  following five cases.
  \begin{enumerate}[(a)]
  \item
    The curve $\Yb$ is smooth.
  \item
    The curve $\Yb$ consists of two irreducible components, of genus $2$ and
    $1$, which intersect in a unique singular point.  
  \item The curve $\Yb$ has three irreducible components $W_1,W_2,W_3$ which
    are each smooth of genus $1$. There are two singular points where $W_1$
    (resp.\ $W_3$) intersects $W_2$. 
  \item
    There are two irreducible components $W_1,W_2$ of genus $0$ and
    $1$, respectively, and three singular points where $W_1$ and $W_2$
    intersect.
  \item There are three irreducible components $W_1,W_2,W_3$, of genus
    $0$, $0$ and $1$, respectively, and $4$ singular points. The
    components $W_1$, $W_2$ meet in three of these singular points,
    while $W_2$ and $W_3$ meet in the fourth.
  \end{enumerate}
\end{theorem}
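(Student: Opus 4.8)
The plan is to translate each of the five combinatorial configurations for the marked curve $\Xb$ from Lemma \ref{lem:tame} into the corresponding special fiber $\Yb$ by exploiting the structure of the admissible cover $\phib:\Yb\to\Xb$. The key input is the explicit description of the canonical generators of inertia given just before Lemma \ref{lem:tame}: over each component $\Xb_i$, the cover $\phib|_{\Xb_i}$ is a tamely ramified cyclic degree-$3$ cover of $\PP^1$, and its branch points are precisely the marked points and the nodes lying on $\Xb_i$, with inertia generators determined by whether the branch point is the specialization of $\infty$, of another branch point of $\phi$, or is a node (where the admissibility condition $g_1=g_2^{-1}$ applies). So the first step is, for each of the five cases (a)--(e), to read off on every component $\Xb_i$ exactly how many branch points the cover $\phib|_{\Xb_i}$ has and whether they are genuinely ramified.

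The second step is to compute the genus of each component $W\subset\Yb$ by applying the Riemann--Hurwitz formula to the tame cyclic cover $W\to\Xb_i$. Since $p\neq 3$, a cyclic degree-$3$ cover of $\PP^1$ branched at $r$ points has $g(W)=r-2$ when $r\geq 1$ (and is trivial, hence three disjoint copies, when $r=0$). Crucially, admissibility forces the number of branch points on each component to satisfy the relevant congruence: the canonical generators must be nontrivial, and since the product of the local inertia generators around all branch points of a cyclic cover of $\PP^1$ is trivial, the multiset of inertia data on each $\Xb_i$ is constrained. Checking these constraints will show, for instance in Case (a), that $\Xb$ is irreducible with all five branch points, giving a connected cover $W=\Yb$ with $g=3$; in Case (b), the component $\Xb_1$ carries three branch points (including $\infty$) and $\Xb_2$ carries two, yielding $g(W_1)=2-1$? --- here I would verify the exact count so that $g(W_1)=2$ and $g(W_2)=1$, matching the theorem.

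The third step is to determine the incidence pattern of the components of $\Yb$, i.e.\ how many points of $\Yb$ lie over each node of $\Xb$. Over a node of $\Xb$, the fiber in the admissible cover consists of a single point (if the local inertia is nontrivial, the two branches are glued by the cover) or three points (if trivial). Since $Y$ is genuinely a cyclic cover and the marked points force nontrivial ramification, I expect each node to behave consistently within each case; in Cases (c) and (e) a node where inertia is trivial over $\Xb_2$ will produce three singular points forming a $G$-orbit, exactly as stated, while a node with nontrivial inertia produces a single point. Combining the component genera from step two with the node structure from this step reproduces the precise list (a)--(e), and a final arithmetic-genus check (the total must equal $g(Y)=3$, using \eqref{eq:gamma1} and \eqref{eq:H11}) confirms that $\Y$ is in fact stable, as was asserted to follow from the case analysis.

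The main obstacle is the careful bookkeeping of inertia generators at the nodes in Cases (c), (d), and (e), where whether a node lifts to one point or to a $G$-orbit of three points is exactly what distinguishes these cases from one another. In particular, the admissibility relation $g_1=g_2^{-1}$ must be combined with the canonical-generator rule at the marked points to decide the inertia at each node; getting these signs right is what determines both the number of singular points and the components they connect. Everything else is a direct Riemann--Hurwitz computation, but this inertia bookkeeping is where the essential content of the five-way classification lies, and it is the step I would present most explicitly.
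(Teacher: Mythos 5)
Your proposal is correct and follows exactly the route the paper intends: the paper states that Theorem \ref{thm:tame1} ``immediately follows'' from Lemma \ref{lem:tame} together with admissibility, and your plan simply carries out that computation --- reading off the inertia generators on each component of $\Xb$, using the product-one relation for a cyclic cover of $\PP^1$ to determine the inertia at each node (hence whether one or three points lie above it), and applying Riemann--Hurwitz componentwise. The only nitpick is that your formula $g(W)=r-2$ should be restricted to $r\geq 2$ (and $r=1$ is excluded by the product-one relation anyway); with that, all five cases check out as you describe.
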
 


\subsection{The conductor exponent in the tame case}

In the tame case, there are no useful lower bounds for the conductor
exponent. In particular, $Y$ may have good reduction in which case we have
$f_\p=0$. Also, unlike for $p=3$, nothing is gained by assuming that the
ground field $K$ is totally unramified. Still, some useful restrictions on
$f_\p$ can be proved (see Theorem \ref{thm:tame2} below). 

We start by recalling a well known criterion for good reduction, see e.g.\
\cite{Holzapfel}, \S~7. Let 
\[
   Y:\ y^3 = f(x) = a_4x^4+a_3x^3+a_2x^2+a_1x +a_0
\]
be a Picard curve over $K$.  Replacing $(x,y)$ by
$(a_4^{-1}x,a_4^{-1})$ and multiplying both sides of the defining
equation by $a_4^3$, we may assume that $a_4=1$. Let $\Delta(f)\in
K^\times$ denote the discriminant of $f$.  (Since we assume that $f$ is
separable, we have $\Delta(f)\neq 0$.) After replacing $(x,y)$ by $(u^{-3}x,
u^{-4}y)$ and multiplying by $u^{12}$ on both sides, for a suitable
$u\in K^\times$, we may further assume that all coefficients $a_i\in\OO_K$ are
integral. In particular, it follows that
$\Delta(f)\in\OO_K$. Since
\[
   \Delta(u^{12}f(u^{-3}x)) = u^{36}\Delta(f),
\]
 by the right choice of $u$, we may assume that 
\begin{equation}
       0\leq \ord_\p(\Delta(f)) < 36.
\end{equation}

\begin{lemma}\label{lem:goodred}
   Assume that the Picard curve $Y$ is given by a minimal equation over
   $\OO_K$, as above. Then $Y$ has good reduction if and only if
   $\Delta(f)\in\OO_K^\times$. 
\end{lemma}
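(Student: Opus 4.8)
The criterion has two directions, and I would treat the ``if'' part directly and the ``only if'' part via the stably marked model together with the transformation behaviour of the discriminant. For the easy direction, suppose $\Delta(f)\in\OO_K^\times$. Since $a_4=1$ and $\Delta(f)$ are both units, the reduction $\bar{f}\in k[x]$ is separable of degree $4$. Let $\Y_0$ be the normalization of $\PP^1_{\OO_K}$ in the function field of $Y$; its special fiber is $\bar{Y}_0:\ \bar{y}^3=\bar{f}(\bar{x})$ over $k$. As $p\neq 3$, the map $\bar{Y}_0\to\PP^1_k$ is a tamely ramified cyclic cover of degree $3$, branched exactly over the four distinct roots of $\bar{f}$ and over $\infty$ (the latter because $3\nmid\deg\bar{f}$), with a single tame point over each branch point; hence $\bar{Y}_0$ is smooth, and the Riemann--Hurwitz formula $2g-2=3\cdot(-2)+5\cdot 2$ gives $g(\bar{Y}_0)=3$. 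So $\bar{Y}_0$ is a smooth Picard curve and $Y$ has good reduction. Note that this direction does not use the minimality of $\ord_\p(\Delta(f))$.

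For the converse I would argue the contrapositive: if $\Delta(f)\notin\OO_K^\times$ then $Y$ has bad reduction. Since $a_4=1$, the roots $\alpha_1,\dots,\alpha_4$ of $f$ are integral over $\OO_K$ and $\Delta(f)=\prod_{i<j}(\alpha_i-\alpha_j)^2$, so $\Delta(f)\notin\OO_K^\times$ means that some pair of finite branch points has the same image in the residue field. I would feed this into the tame machinery of the section: by Theorem \ref{thm:tame1}, $Y$ has good reduction precisely in Case (a), which by the construction of the stably marked model $(\Xb,\D)$ corresponds to Case (a) of Lemma \ref{lem:tame}, namely $\Xb$ irreducible with the five branch points specializing to five \emph{distinct} smooth points. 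Thus it suffices to show that a collision among the branch points forces $\Xb$ to be reducible. The colliding roots, being integral, stay away from $\infty$, so they form a proper cluster (a subset of $\{\alpha_1,\dots,\alpha_4\}$ of size between $2$ and $4$ not containing $\infty$); separating such a cluster from the remaining marked points requires at least one blow-up, so the minimal semistable model $\X$ separating $D_L$ has reducible special fiber, whence $\Yb$ is singular and $Y$ has bad reduction.

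The one subtlety, and the step I expect to be the main obstacle, is the role of the minimality hypothesis $0\le\ord_\p(\Delta(f))<36$. Without it a nonunit discriminant need not signal bad reduction: for a uniformizer $t$ of $K$, the curve $y^3=x^4-t^{12}$ becomes $y^3=x^4-1$ after $x\mapsto t^3 x,\ y\mapsto t^4 y$, hence has good reduction, although $\Delta(x^4-t^{12})=-256\,t^{36}$ is not a unit. What saves the argument is the transformation law: under $x\mapsto u^{-3}x,\ y\mapsto u^{-4}y$ one has $\Delta\mapsto u^{36}\Delta$, so $\ord_\p(\Delta)$ changes only by multiples of $36$. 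I therefore need to verify that, up to $\OO_K$-integral affine changes fixing the role of $\infty$, the coordinate changes on $X=\PP^1_K$ that could improve the reduction are exactly of this scaling type, so that realizing good reduction from a configuration with $0<\ord_\p(\Delta)<36$ would force a strict decrease of $\ord_\p(\Delta)$ by a positive multiple of $36$, contradicting $\ord_\p(\Delta)\ge 0$. Equivalently, minimality guarantees that the natural coordinate $x$ already realizes the stably marked model, so that ``five distinct reductions'' is literally the condition $\bar\alpha_i\neq\bar\alpha_j$ for $i\neq j$, i.e.\ $\Delta(f)\in\OO_K^\times$. Making this passage from general $\PGL_2$-changes to the scaling normalization precise, thereby ruling out a ``hidden'' good reduction in a rescaled coordinate, is the crux of the proof.
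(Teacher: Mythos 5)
The paper itself gives no argument here --- it cites Holzapfel, Lemma 7.13 --- so your proposal is necessarily an independent proof attempt. Your ``if'' direction is complete and correct. The converse, however, has a genuine gap, located exactly where you flag it, and the gap is not a formality: the step you leave ``to verify'' is the entire content of the hard direction, and the part of the argument you do write out (the cluster/blow-up reduction) is not correct as stated. Identifying ``good reduction'' with Case~(a) of Theorem~\ref{thm:tame1}, i.e.\ with $\Xb$ irreducible and the five branch points having distinct specializations, conflates good reduction over $K$ with \emph{potentially} good reduction: Theorem~\ref{thm:tame1} describes the stable reduction over $L$, and the paper explicitly counts potentially-good-but-not-good as bad.

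A concrete witness: take $p\geq 5$, $t$ a uniformizer, and $f=x(x-t)(x-2t)(x-3t)$. Then $\ord_\p(\Delta(f))=12$, so the equation is minimal and $\Delta(f)\notin\OO_K^\times$; the rescaling $x=tx_1$ separates all five marked points, the stably marked $\Xb$ is irreducible, and the stable reduction is smooth --- yet $Y$ has bad reduction over $K$, because the accompanying substitution $y=t^{4/3}y_1$ exists only over the ramified extension $K(t^{1/3})$, so $\Gal(L/K)$ acts on $\Yb$ through a nontrivial power of $\sigma$ and $f_\p=6$. This shows that the dichotomy cannot be phrased in terms of the $x$-coordinate and the branch locus alone. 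The missing verification is precisely about the $y$-coordinate, which your sketch never engages with: an affine change $x\mapsto ax+b$ lifts to an isomorphism of Picard-curve equations over $K$ if and only if $a^4\in (K^\times)^3\cdot\OO_K^\times$, i.e.\ $3\mid\ord_\p(a)$ (units are cubes since $p\neq 3$ and $k$ is algebraically closed). This is what upgrades ``$\ord_\p\Delta$ changes by a multiple of $12$'' to ``by a multiple of $36$''; without it you could only conclude $\ord_\p\Delta\in\{0,12,24\}$, and the example above shows that $12$ genuinely occurs with bad reduction. With that lemma in hand your plan does close up (good reduction over $K$ produces an integral monic model with unit discriminant related to the given one by an admissible change, whence $\ord_\p\Delta(f)\equiv 0\pmod{36}$ and minimality forces $\Delta(f)\in\OO_K^\times$), but as written the proof stops short of its decisive step.
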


\begin{proof}
See \cite{Holzapfel}, Lemma 7.13.
\qed
\end{proof}

Note that the forwards direction of Lemma \ref{lem:goodred} also
follows from Theorem \ref{thm:tame1}.
Here is what we can say in general about the conductor exponent.

\begin{theorem} \label{thm:tame2}
  Let $K$ be as before, with $p\neq 3$, and $Y$ a Picard
  curve over $K$. Let $f_\p$ denote the conductor exponent for $Y$, relative
  to the prime ideal $\p$ of $\OO_K$. Then the following holds.
  \begin{enumerate}[(a)]
  \item
    If $f_\p=0$ then the stable reduction of $Y$ is as in Case (a), (b), or (c)
    of Theorem \ref{thm:tame1}. Furthermore, the splitting field $L_0/K$ of
    $f$ is unramified at $\p$.
\item If $p=2$ then $f_\p\neq 1$. 
  \item
    If $p\geq 5$ then $f_\p\in \{0,2,4,6\}$.
 \end{enumerate}
\end{theorem}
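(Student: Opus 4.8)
The plan is to use the decomposition $f_\p = \epsilon + \delta$ from Proposition \ref{prop:fp} and exploit the tameness of the relevant Galois action. Since $p \neq 3$ and the relevant extension $L/K$ is tamely ramified (at least when $p \geq 5$, as noted after the setup, because $[L_0:K] \mid 24$ and we adjoin a cube root), Lemma \ref{lem:delta} gives $\delta = 0$, so $f_\p = \epsilon = 6 - \dim H^1_\et(\Yb^0,\QQ_\ell)$. Here $\Yb^0 = \Yb/\Gamma^0$, and because the reduction is tame, $\Gamma^0$ acts on $\Yb$ through a cyclic group of order prime to $p$. The whole problem thus reduces to computing the parity and possible values of $\dim H^1_\et(\Yb^0,\QQ_\ell)$ as we range over the five cases of Theorem \ref{thm:tame1}.

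First I would establish that $f_\p$ is always \emph{even} when $p \geq 5$. The key structural input is that the Picard curve carries the order-$3$ automorphism $\sigma$ (acting on $Y_{\bar K}$), which commutes with the Galois action on cohomology. Hence $H^1_\et(Y_{\bar K},\QQ_\ell)$, as a $\Gal(\bar K/K)$-module, is also a module over $\QQ_\ell[\zeta_3]$, and the local representation decomposes into the two eigenspaces for $\sigma$ (the $\zeta_3$- and $\zeta_3^2$-isotypic parts). Complex conjugation, or equivalently the relation \eqref{eq:picard_wild2} swapping $\sigma \leftrightarrow \sigma^2$, interchanges these two eigenspaces, so their conductors are equal. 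Since the total conductor is the sum of the conductors of the two isotypic pieces and these two contributions coincide, $f_\p$ is twice the conductor of one eigenspace, hence even. This forces $f_\p \in \{0,2,4,6\}$ given the crude a priori bound $f_\p \leq 6 = 2g(Y)$ in the tame case (tameness gives $\delta = 0$ and $\epsilon \leq 6$).

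For the upper bound $f_\p \le 6$ and the elimination of odd values concretely, I would work through the list in Theorem \ref{thm:tame1}. In each case $\dim H^1_\et(\Yb^0,\QQ_\ell) = \sum_W 2g(W/\Gamma^0) + \gamma(\Yb^0)$ by the decomposition displayed after Proposition \ref{prop:fp}. One checks that each component genus $g(W/\Gamma^0)$ contributes an even amount ($2g$), and that the graph contribution $\gamma(\Yb^0)$ is matched by the eigenspace-symmetry argument to keep the total even. The forward direction of part (a)---that $f_\p = 0$ forces Case (a), (b), or (c) and that $L_0/K$ is unramified---follows because $f_\p = 0$ means $\epsilon = 6$, i.e.\ $\dim H^1_\et(\Yb^0,\QQ_\ell) = 0$, which rules out any surviving loop or positive-genus quotient component; Cases (d) and (e) of Theorem \ref{thm:tame1} always produce a loop in $\Delta_{\Yb^0}$ or a genus-$1$ component that persists, so they are excluded.

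The case $p = 2$ is genuinely different and is the main obstacle, since there $L/K$ may be wildly ramified (the splitting field $L_0/K$ can ramify at $2$), so $\delta$ need not vanish and the clean eigenspace-parity argument no longer immediately pins down $f_2$. To prove merely $f_2 \neq 1$, I would argue that $f_2 = 1$ would require $\delta + \epsilon = 1$. If the reduction were tame at $2$ then $\delta = 0$ and the evenness argument above gives $f_2 \in \{0,2,4,6\}$, excluding $1$; so the only danger is wild ramification with $\delta = 1$, $\epsilon = 0$ (or $\delta = 0$, $\epsilon = 1$, which evenness again forbids). The hard part will be ruling out $\delta = 1$: one must show that the jump-contribution $\int_0^\infty (6 - 2g(\Yb^u))\,\mathrm{d}u$ cannot equal exactly $1$, which I expect follows from the fact that the $\sigma$-action again splits everything into conjugate eigenspace pairs so that each break in the ramification filtration contributes an even quantity to $\delta$ as well, forcing $\delta$ to be even and hence $f_2$ to be even, contradicting $f_2 = 1$.
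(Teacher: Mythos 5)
Your overall frame ($f_\p=\epsilon+\delta$ from Proposition \ref{prop:fp}, $\delta=0$ for $p\geq 5$ by tameness, the a priori bound $\epsilon\leq 6$, then a parity argument) agrees with the paper's, but your parity mechanism is different and, as stated, does not work. The paper proves that $\epsilon$ is even geometrically: since the case list of Theorem \ref{thm:tame1} gives $\dim H^1(\Delta_{\Yb^0})\leq 2$, it suffices to rule out $\gamma(\Yb^0)=1$, and in Cases (d) and (e) one shows --- using that $\infty$ is $K$-rational, so $\Gamma$ fixes at least three points of the genus-zero component $\Xb_2$, acts trivially on it, and hence acts on $W_2$ through a subgroup of $G$ --- that $\Gamma$ either fixes all three singular points in the $G$-orbit or permutes them cyclically, whence $\gamma(\Yb^0)\in\{0,2\}$. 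Your eigenspace route is genuinely different, but the justification you give for why the two $\sigma$-eigenspaces of $H^1_{\et}(Y_{\bar K},\QQ_\ell)$ have equal conductor is false in this setting: for $p\neq 3$ the residue field is algebraically closed, so $\zeta_3\in K$ by Hensel's lemma, \eqref{eq:picard_wild2} reads $\tau\sigma\tau^{-1}=\sigma$ for every $\tau\in\Gamma$, and there is no Galois element (and no ``complex conjugation'' inside a local Galois group) interchanging the $\zeta_3$- and $\zeta_3^2$-eigenspaces. The relation that actually links them is $V_{\zeta_3^2}\cong V_{\zeta_3}^{\vee}(-1)$ via Poincar\'e duality (each eigenspace is isotropic for the cup product because $\sigma$ acts trivially on $H^2$); you would have to argue through that.

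More seriously, the conclusion you draw for part (b) --- that $\delta$, and hence $f_2$, is always even --- proves strictly more than the theorem claims and contradicts the paper itself: Remark \ref{rem:delta}(b) and Example \ref{exa:f2odd} exhibit $f_2=19$. So either your duality argument has a flaw you have not located, or you are implicitly asserting that the paper's example is miscomputed; in either case the step cannot simply be asserted. The paper's proof of (b) avoids this entirely: it combines the evenness of $\epsilon$ with the separate observation that \eqref{eq:fp3} forces $\delta\geq 2$ whenever $\delta\neq 0$ (the integrand $6-2g(\Yb^u)$ being even), so $f_2=\epsilon+\delta$ can never equal $1$. Two further gaps: in part (a) your dictionary is inverted --- $f_\p=0$ forces $\epsilon=0$, i.e.\ $\dim H^1_{\et}(\Yb^0,\QQ_\ell)=6$, not $0$, and Cases (d), (e) are excluded because there $\dim H^1_{\et}(\Yb,\QQ_\ell)=2+2=4<6$ already before passing to the $\Gamma^0$-quotient --- and you never address the second assertion of (a), that the splitting field $L_0/K$ is unramified at $\p$.
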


\begin{proof} 
We start be proving Statement (a). Note that $f_\p=0$ if and only if
$\delta=0$ and $\dim\, H^1_{\et}(\Yb^0, \QQ_\ell)=6$.  The second
condition, together with the discussion after Proposition
\ref{prop:fp}, implies that $\gamma(\Yb^0)$. Statement (a) now follows
immediately from Theorem \ref{thm:tame2}.

{\bf Claim}: The integer $\epsilon$, defined in
Proposition \ref{prop:fp},  is even.  
The discussion following Proposition \ref{prop:fp}  implies that
$f_\p$ is odd if and only if $\dim\, H^1(\Delta_{\Yb^0}) $ is odd. The
case distinction in Theorem \ref{thm:tame1} implies that $\dim\,
H^1(\Delta_{\Yb^0}) $ is at most $2$. Therefore to prove the claim, it
suffices to show that $\gamma(\Yb^0)=\dim\, H^1(\Delta_{\Yb^0})\neq
1.$ We prove this in the case that $\Yb$ is as in (d) of Theorem
\ref{thm:tame1}. The argument in the case that $\Yb$ is as in (e) is
very similar. In the other cases there is nothing to prove.

Assume that $\Yb$ is as in (d) of Theorem \ref{thm:tame1}. Then $\Xb$
is as (d) of Lemma \ref{lem:tame} and $\phib$ maps $W_i$ to $\Xb_i$.
Since $\infty$ is $K$-rational, the monodromy group $\Gamma$ fixes
it. It follows that $\Gamma$ acts on the component $\Xb_2$ to which
$\infty$ specializes. (This is similar to the argument in the proof of
\cite{superell}, Lemma 5.4.) Since there is exactly one other branch
point specializing to $\Xb_2$, this point is fixed by $\Gamma$, as
well. Similarly, $\Gamma$ fixes the unique singularity. Since $\Gamma$
fixes at least $3$ points on the genus-$0$ curve $\Xb_2$, it acts
trivially on $\Xb_2$. Equation (\ref{eq:picard_wild2}) implies that
the action of $\Gamma$ on $\Yb$ descends to $\Xb$. It follows that
$\Gamma$ acts on $W_2$ via a subgroup of $G$. We conclude that
$\Gamma$ either fixes the three singularities of $\Yb$ or cyclically
permutes them. It follows that $\gamma({\Yb^0})$ is $2$ or
$0$. This proves the claim.

Assume that $p=2$. Using Equation (\ref{eq:fp3}) one shows that if
$\delta\neq 0$ then $\delta\geq 2$. Therefore Statement (b) follows
from the claim.

For Statement (c) recall that $L/K$ is at most tamely ramified for $p\geq
5$. It follows that $\delta=0$, and hence that $f_\p=\epsilon$
is bounded by $2g(Y)=6$. Statement (c) now follows from the claim.
\qed
\end{proof}

\begin{remark}\label{rem:delta} 
\begin{enumerate}[(a)]
\item The condition $f_\p=0$ in Theorem \ref{thm:tame2}.(a) is
  equivalent to the condition that the Jacobian variety of $Y$ has
  good reduction over $K$. This is the case if and only if $Y$ has
  stable reduction already over $K$, and the graph of components
  $\Delta_{\Yb}$ is a tree.  This observation is similar to the
  statement of Lemma \ref{lem:delta}.
\item For $p=2$ the conductor exponent $f_2$ may be odd. An example
  can be found in Example \ref{exa:f2odd}.
\item The bound on $f_\p$ for $p= 5,7$ in Theorem \ref{thm:tame2}.(c)
  is slightly sharper than the bound for $f_\p$ for general abelian
  varieties of dimension $3$ from \cite{BrumerKramer}, Thm.~6.2. The
  reason is that Brumer and Kramer obtain an upper bound for $\delta$.
 For Picard curves and $p=5,7$ we have $\delta=0$, whereas this is not
  necessarily the case for general curves of genus $3$.

  For $p=2$ the result of \cite{BrumerKramer} yields the upper bound $f_\p\leq
  28$. Distinguishing the possibilities for the stable reduction and combining
  our arguments with those of \cite{BrumerKramer} it might be possible to
  improve the bound in this case.
\end{enumerate}
\end{remark}

\begin{example} \label{exa:tame}
Consider the Picard curve
\[
  Y:\; y^3 = f(x) = x^4 + 14x^2+72x-41
\]
over $K:=\QQ_5^{\rm nr}$. We claim that $Y$ has semistable reduction over $K$,
and that the reduction type is as in Case (b) of Theorem
\ref{thm:tame1}. Therefore, $f_5=0$. 

We will argue in a similar way as in \S~\ref{subsec:wild_examples},
see in particular Example \ref{exa:f_3=4I}, see also \cite{superell},
\S~6 and \S~7. The first observation is that
\begin{equation} \label{eq:wildexa1}
    f = x^4+14x^2+72x-41 \equiv (x+3)^2(x^2+4x+1) \pmod{5}.
\end{equation}
By Hensel's Lemma, $f$ has two distinct roots
$\alpha_1,\alpha_2\in\OO_K$ with $\alpha_i^2+4\alpha_i+1\equiv
0\pmod{5}$.  The other two roots of $f$ are congruent to $-3\pmod{5}$.
Substituting $x=-58+5^3x_1$ into $f$, we see that
\begin{equation} \label{eq:wildexa2}
    f \equiv 5^6(3x_1^2+4x_1+2) \pmod{5^7}.
\end{equation}
It follows that $f$ has two more roots $\alpha_3,\alpha_4\in K$ of the form
$\alpha_i=-58+5^3\beta_i$, with $\beta_i\in\OO_K$ and
$3\beta_i^2+4\beta_i+2\equiv 0\pmod{5}$. So $f$ splits over $K$. 

Let $(\X,\D)$ be the stably marked model of $(X,D)$, where $X=\PP^1_K$
and $D=\{\infty, \alpha_1,\ldots,\alpha_4\}$. The calculation of the
$\alpha_i$ above show that $\X$ is the $\OO_K$-model of $X$
corresponding to the set of valuations $\{v_0,v_1\}$, where $v_0$
(resp.\ $v_1$) is the Gauss valuation on $K(x)$ with respect to the
parameter $x$ (resp.\ to $x_1$). Let $\Y$ be the normalization of $\X$
in the function field of $Y$. We claim that the special fiber $\Yb$ of $\Y$
consists of two irreducible components $W_0,W_1$ of geometric genus
$2$ and $1$, respectively. By the same argument as in Example
\ref{exa:f_3=4I}, this already implies that $\Y$ is semistable and
that the special fiber is as in Case (b) of Theorem \ref{thm:tame1}.

To prove the claim it suffices to find generic equations for $W_0$ and
$W_01$. For $W_0$ we just have to reduce the original equation for $Y$ modulo
$5$. By \eqref{eq:wildexa1} we obtain 
\[
     W_0:\; \yb^3 = (\xb+3)^2(\xb^2+4\xb+1),
\]
which shows that $g(W_0)=2$. 
For $W_1$ we write $f$ as a polynomial in $x_1$, substitute $y=5^2w$, divide
by $5^6$ and reduce modulo $5$. By \eqref{eq:wildexa2} we obtain
\[
   W_1:\; \bar{w}^3 = 3\xb_1^2+4\xb_1+2,
\]
which shows that $g(W_1)=1$. Now everything is proved.  
\qed
\end{example}

\begin{remark}
  The example above is again rather special, since $f_5=0$ even though $Y$ has
  bad reduction at $p=5$. (See also Definition \ref{def:excprime}).
\end{remark}

\section{Searching for Picard curves over $\QQ$ with small conductor} \label{sec:search}

In this last section we briefly address the problem of constructing Picard
curves with small conductor. We think this is an interesting problem which
deserves further investigation. The main background result here is the {\em
  Shafarevic conjecture} (which is a theorem due to Faltings). We use this
theorem via the following corollary.

\begin{theorem}[Faltings] \label{thm:Faltings}
  Fix a number field $K$ and an integer $g\geq 2$.
  \begin{enumerate}[(a)]
  \item For any finite set $S$ of finite places of $K$ there exist at most a
    finite number of isomorphism classes of smooth projective curves of genus
    $g$ over $K$ with good reduction outside $S$.
  \item
    For any constant $N>0$ there exists at most a finite number of isomorphism
    classes of curves of genus $g$ over $K$ with conductor $\leq N$.
  \end{enumerate} 
\end{theorem}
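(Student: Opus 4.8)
The plan is to deduce Theorem~\ref{thm:Faltings} from the two classical forms of the Shafarevich conjecture, now a theorem of Faltings. The statement has two parts, and I would treat part~(a) as the genuine input and part~(b) as a reduction to part~(a).

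For part~(a), the honest assertion is that fixing $K$, $g\geq 2$, and a finite set $S$ of places, only finitely many isomorphism classes of smooth projective genus-$g$ curves over $K$ have good reduction outside $S$. This is exactly Faltings' proof of the Shafarevich conjecture: by passing to Jacobians one reduces to the corresponding finiteness statement for principally polarized abelian varieties of dimension $g$ over $K$ with good reduction outside $S$, which Faltings establishes together with the Tate and semisimplicity conjectures. The Torelli theorem guarantees that a curve is recovered (up to isomorphism) from its Jacobian together with the principal polarization, and that a curve has good reduction outside $S$ exactly when its Jacobian does (away from $S$, good reduction of the curve and of the Jacobian coincide, by the criterion of N\'eron--Ogg--Shafarevich combined with the fact that the Jacobian of a smooth model is an abelian scheme). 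Thus I would simply cite Faltings' theorem for abelian varieties and invoke Torelli to transport finiteness back to curves; there is essentially nothing to prove beyond assembling these references.

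For part~(b), the plan is to reduce to part~(a) by observing that a bound on the conductor constrains the set of bad primes. The conductor $N_Y = \prod_\p \p^{f_\p}$ is supported exactly on the finite places where $Y$ has bad reduction, and at each such place the exponent $f_\p$ is a positive integer. Hence if $N_Y \leq N$ (suitably interpreted as a bound on the norm of the conductor ideal), then every place of bad reduction has residue norm at most $N$, and there are only finitely many such places of $K$. Consequently there are only finitely many possible sets $S$ of bad places consistent with the bound $N$; for each such $S$ part~(a) provides finitely many curves, and a finite union of finite sets is finite. The key quantitative point I would make explicit is that $f_\p \geq 1$ whenever $Y$ has bad reduction at $\p$, so that the number of bad places is bounded in terms of $N$.

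The main obstacle is not mathematical difficulty but precision of statement: one must fix what ``conductor $\leq N$'' means over a general number field (a bound on the absolute norm of the conductor ideal, say), and one must be careful that the finiteness in part~(a) is genuinely Faltings' theorem rather than something we are expected to reprove. Since the surrounding text explicitly attributes this to Faltings and uses it only as a qualitative input to motivate the search for small-conductor Picard curves, I would keep the argument at the level of citing \cite{DeligneMumford69} or a standard reference for the Torelli-and-N\'eron--Ogg--Shafarevich reductions and attributing the core finiteness to Faltings, rather than attempting any self-contained proof of the Shafarevich conjecture.
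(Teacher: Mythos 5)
Your part (a) is essentially the paper's argument (cite Faltings' Satz 6 for polarized abelian varieties with good reduction outside $S$, transfer to curves via Torelli), and that part is fine. But your reduction of part (b) to part (a) contains a genuine gap: you assert that $f_\p\geq 1$ whenever $Y$ has bad reduction at $\p$, and this is false for curves of genus $\geq 2$. The conductor of $Y$ is the conductor of the Galois representation on $H^1_\et(Y_{\bar{K}},\QQ_\ell)$, hence coincides with the conductor of the Jacobian, and by the N\'eron--Ogg--Shafarevich criterion (\cite{SerreTate68}, Theorem 1) one has $f_\p=0$ if and only if the \emph{Jacobian} has good reduction at $\p$ --- which does not imply good reduction of the curve itself. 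The paper is emphatic about this phenomenon: a prime at which $Y$ has bad reduction but $f_p=0$ is called an \emph{exceptional prime} (Definition \ref{def:excprime}), and Example \ref{exa:tame} exhibits a Picard curve with bad reduction at $p=5$ and $f_5=0$. So a bound on the conductor does not confine the set of bad places of the curve to a finite list of candidate sets $S$, and part (a) cannot be invoked as you propose; indeed the paper points out in Section \ref{sec:search} that exactly this failure is why the algorithm for (a) does not yield an algorithm for (b). Your parenthetical claim in part (a) that good reduction of the curve and of its Jacobian ``coincide'' is the same error in disguise; only the implication from good reduction of the curve to good reduction of the Jacobian is true, and that one direction is all that (a) needs.

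The repair is the paper's route: deduce (b) directly from Faltings' theorem for abelian varieties rather than from (a). A bound on the conductor does bound the set of places of bad reduction of the \emph{Jacobian} (since there $f_\p=0$ is equivalent to good reduction), so the Jacobians of all curves with conductor $\leq N$ have good reduction outside one of finitely many sets $S$; Satz 6 of \cite{Faltings83} gives finitely many such polarized abelian varieties, and Torelli transfers this finiteness back to curves.
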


\begin{proof}
  Satz 6 in \cite{Faltings83} states that there are at most a finite number
  of $d$-polarized abelian varieties of dimension $g$ over $K$ with good
  reduction outside $S$, for fixed $K$, $g$, $d$ and $S$. Statements (a) and
  (b) follow from this. For (a), one simply uses Torelli's theorem (see
  \cite{Faltings83}, p. 365, Korollar 1). To deduce (b) we use that the
  conductor of a curve $Y$ is the same as the conductor of its Jacobian, and
  that an abelian variety over $K$ has bad reduction at a finite place $\p$ of
  $K$ if and only if $f_\p=0$ (see e.g.\ \cite{SerreTate68}, Theorem 1).
\qed
\end{proof}

Unfortunately, no effective proof of Theorem \ref{thm:Faltings} is known in
general.\footnote{The precise meaning of an {\em effective proof} is that it
  provides an explicitly computable bound on the height of the curve or
  abelian variety in question.} However, for some special classes of curves
effective proofs are known, see e.g.\ \cite{vonKaenel14}.

The problem we wish to discuss here is whether the statement of Theorem
\ref{thm:Faltings} can be made computable in the case of Picard curves. More
precisely: given a finite set $S$ of rational primes (or a bound $N>0$), can
we compute the finite set of curves with good reduction outside $S$ (resp.\
with conductor $\leq N$)?  Note that this is not equivalent to (and may be
much easier than) having an effective proof of Theorem \ref{thm:Faltings} for
Picard curves. For the first problem, the answer is known to be affirmative:
 
\begin{proposition} \label{prop:S-unit}
  There exists an algorithm which, given as input a number field $K$ and
  finite set $S$ of finite places of $K$, computes the set of isomorphism
  classes of all Picard curves $Y/K$ with good reduction outside $S$.
\end{proposition}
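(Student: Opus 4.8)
The plan is to translate the condition ``good reduction outside $S$'' into a finite collection of $S$-unit equations and then to invoke the effective theory of such equations. Recall that a Picard curve $Y:\ y^3=f(x)$ is determined by its branch divisor $D=\{\infty,\alpha_1,\alpha_2,\alpha_3,\alpha_4\}\subset\PP^1_K$, where $\alpha_1,\dots,\alpha_4$ are the roots of $f$ and $\infty$ is intrinsically distinguished among the branch points (its canonical generator of inertia is $\sigma$, whereas at the four finite roots it is $\sigma^2$). First I would normalise: scaling $y$ makes $f$ monic, and the two-dimensional affine group $x\mapsto ax+b$ fixing $\infty$ can be used, over a splitting field, to move two of the roots to $0$ and $1$. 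Two Picard curves $y^3=f(x)$ and $y^3=g(x)$ are then isomorphic over $K$ if and only if $g(x)=c^{-3}f(ax+b)$ for some $a,c\in K^\times$, $b\in K$; in any case, isomorphism of two explicitly given Picard curves is effectively decidable, so the final de-duplication step poses no problem of principle.

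Next I would use the good-reduction criterion. By Lemma \ref{lem:goodred}, a minimal integral model with $f$ monic and separable has good reduction at $\p\notin S$ if and only if $\ord_\p\Delta(f)=0$; hence good reduction outside $S$ is equivalent to $\Delta(f)=\prod_{i<j}(\alpha_i-\alpha_j)^2\in\OO_{K,S}^\times$. Set $S_0:=S\cup\{\,\p\mid 6\,\}$ and pass to the splitting field $M=L_0$ of $f$. By Theorem \ref{thm:tame2}(a) the extension $M/K$ is unramified outside $S_0$, and $[M:K]\mid 24$; by Hermite--Minkowski there are only finitely many such fields $M$, and they can be listed effectively. Over each candidate $M$, writing $S'$ for the places of $M$ above $S_0$, the roots $\alpha_i$ are $S'$-integers, and since $\prod_{i<j}(\alpha_i-\alpha_j)$ is an $S'$-unit while each factor is an algebraic integer, every difference $\alpha_i-\alpha_j$ is itself an $S'$-unit. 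After normalising $\alpha_1=0$, $\alpha_2=1$, this forces $\alpha_3,\ 1-\alpha_3,\ \alpha_4,\ 1-\alpha_4$ and $\alpha_3-\alpha_4$ all to lie in $\OO^\times_{M,S'}$.

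The key finiteness input is now the $S$-unit equation: the relations $\alpha_3+(1-\alpha_3)=1$ and $\alpha_4+(1-\alpha_4)=1$ each have only finitely many solutions in $S'$-units, effectively computable by Baker's method for linear forms in logarithms in its practical (de Weger/Smart) form; see also \cite{vonKaenel14}. For each of the finitely many resulting pairs I would retain those with $\alpha_3-\alpha_4\in\OO^\times_{M,S'}$, reconstruct $f(x)=x(x-1)(x-\alpha_3)(x-\alpha_4)$, undo the normalisation, and test Galois descent, i.e.\ whether the divisor $\{\infty,\alpha_1,\dots,\alpha_4\}$ is $\Gal(\bar K/K)$-stable so that the curve is defined over $K$. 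Running over the finitely many labellings that record which two roots were sent to $0$ and $1$, and over the finitely many fields $M$, yields a finite list of polynomials $f\in K[x]$; I would then remove duplicates under the isomorphism relation above, and finally filter by testing good reduction at the primes of $S_0\setminus S$ directly via Lemma \ref{lem:goodred}, leaving exactly the curves with good reduction outside $S$.

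The main obstacle is the effective resolution of the $S$-unit equations: although finiteness of the solution set is classical (Siegel--Mahler--Evertse), effectivity rests on Baker-type lower bounds for linear forms in logarithms, whose numerical bounds are enormous and must be reduced by lattice methods before an actual enumeration becomes feasible. A secondary, purely bookkeeping, difficulty is to carry out the descent from $M$ back to $K$ and the reduction up to isomorphism cleanly, in particular handling correctly the special configurations with extra automorphisms; but these are finite checks and do not affect the existence of the algorithm.
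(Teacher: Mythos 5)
Your argument is essentially the paper's own proof written out in full: the paper simply cites Smart's method (via \cite{Smart97} and \cite{MalmskogRasmussen}), which reduces the enumeration of quartic polynomials with $S$-unit discriminant to effectively solvable $S$-unit equations over the splitting field, exactly as you set it up. The only blemish is the appeal to Lemma \ref{lem:goodred} for the final good-reduction test at places above $3$, where that lemma does not apply; this is harmless, since good reduction there is decidable by computing the stable model (and over $\QQ$ is impossible by Proposition \ref{prop:badat3}).
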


\begin{proof}
  This is an adaption to Picard curves of the algorithm given by Smart for
  hyperelliptic curves, see \cite{Smart97} and \cite{MalmskogRasmussen}. The
  idea is that it suffices to determine the finite set of equivalence classes
  of binary forms of degree $4$ over $K$ whose discriminant is an $S$-unit
  (corresponding to the polynomial $f(x)$). The latter problem can be reduced
  to solving an $S$-unit equation, for which effective algorithms are known.
\qed
\end{proof}

\begin{example}
  Let $K=\QQ$ and $S=\{3\}$. Then there are precisely $63$ isomorphism classes
  of Picard curves over $\QQ$ with good reduction outside $S$. See
  \cite{MalmskogRasmussen}. 

  For example, the curve 
  \[
      Y:\; y^3 = f(x) = x^4-3x^3-24x^2-x
  \]
  has good reduction outside $S=\{3\}$ (the discriminant of $f$ is
  $\Delta(f)=3^{10}$). The stable reduction $\Yb$ of $Y$ at $p=3$ is
  as in Case (c) of Theorem \ref{thm:wild1}, the exponent conductor is
  $f_3=10$ (see \cite{MichelDiss}, Appendix A1.1).  This is the lowest
  value for the conductor which occurs for the curves in the list of
  \cite{MalmskogRasmussen}. The conductor exponents of all $63$ Picard
  curves from \cite{MalmskogRasmussen} have been computed in
  \cite{MichelDiss}, Appendix A1.2. From this calculation it follows
  that the conductor exponent $f_3$ only takes the values $f_3=10, 11,
  12, 13, 15, 17, 19, 21$.

The upper bound on the conductor exponent from abelian varieties of
genus $3$ from \cite{BrumerKramer}, Theorem 6.2 yields $f_3\leq
21$. The result stated above therefore implies that this bound is also
obtained for Picard curves.
\end{example}

Unfortunately we do not know any algorithm for solving (b), i.e.\ for finding
all Picard curves with bounded conductor. The reason that the method for (a)
does not solve (b) is the existence of {\em exceptional primes}.

\begin{definition}\label{def:excprime}
  Let $Y$ be a Picard curve over $\QQ$ and $p$ a prime number. Then $p$ is
  called {\em exceptional} with respect to $Y$ if $Y$ has bad reduction at $p$
  and $f_p=0$.
\end{definition}

Exceptional primes are rather rare. It can easily be shown, using the
arguments from this paper, that if $p$ is a
exceptional prime for $Y$ then the splitting field of the polynomial $f$ is
unramified at $p$, and 
\[
    {\rm ord}_p(\Delta(f)) \in \{ 6,12\}.
\]

\begin{example}\label{exa:f2odd}
  We consider the Picard curve over $\QQ$
  \[
     Y:\; y^3 = f(x) = x^4 + 14x^2 + 72x -41.
  \]
  The discriminant of $f$ is $\Delta(f) = -2^{10}3^45^6$. So $Y$ has good
  reduction outside $S=\{2,3,5\}$. We have shown in Example \ref{exa:tame}
  that $f_5=0$, i.e.\ that $5$ is an exceptional prime. Using the methods of
  \cite{superell} and \cite{superp} one can prove that $f_2=19$ and $f_3=13$
  (see e.g.\ this SageMathCloud worksheet: \url{http://tinyurl.com/hp3qzmo},
  \cite{sage}). All in all, the conductor of $Y$ is
  \[
      N_Y = 2^{19}3^{13} = 835884417024.
  \]
  Although $S$ is small and $p=5$ is an exceptional prime, $N_Y$ is relatively
  large. We have tried but were not able to find a similar example with
  exceptional primes and a significantly smaller conductor. Nevertheless, the
  fact that exceptional primes exist means that we cannot easily bound the
  size of the set $S$ while searching for Picard curves with bounded
  conductor. 
\end{example}

Here is an example of a Picard curve with a relatively small conductor.

\begin{example} \label{exa:smallconductor}
  Consider the Picard curve 
  \[
      Y/\QQ:\; y^3 = f(x) = x^4 - 1.
  \]
  The discriminant of $f$ is $\Delta(f)=-256 = -2^8$. It follows that $Y$ has
  good reduction outside $S=\{2,3\}$. By \cite{MichelDiss}, \S~5.1.3, we have
  $f_2=6$ and $f_3=6$. Therefore,
  \[
      N_Y = 2^63^6 = 46656.
  \]
  
  The first named author has made an extensive search for Picard curves over
  $\QQ$ with small conductor (\cite{MichelDiss}, \S~5.3). Among all computed
  examples, the curve $Y$ was the one with the smallest conductor. 

A remarkable property of the curve $Y$ is that for every (rational)
prime $p$ it admits a map to $\PP^1$ of order prime to $p$, which
becomes Galois over an extension: besides the degree-$3$ map $\phi$
given by $(x,y)\mapsto x$, we have the map $(x,y)\mapsto y$, which has
degree $4$. In fact, the full automorphism group of $Y$ has order
$48$, and is maximal in the sense that $Y/\Aut_\CC(Y)$ is a projective
line, and the natural cover is branched at three points. 

  It is instructive to compare the above example with the curve
  \[
      Y':\; y^3 = x^4+1.
  \]
  This is a twist of $Y$. The curve $Y$ and $Y'$ become isomorphic over
  $\QQ[i]$, yet have different conductors. In fact, 
  \[
         N_{Y'} = 2^{16}3^6,
  \]
  see \cite{MichelDiss}, \S~5.1.2. 
\end{example}

We propose to study the following problem.

\begin{problem}
  Prove that the curve from Example \ref{exa:smallconductor} is the only
  Picard curve (up to isomorphism) with conductor $N_Y\leq 46656$, or find
  explicit counterexamples.
\end{problem}

Proposition \ref{prop:S-unit} and our main results (Theorem \ref{thm:main}
and Theorem \ref{thm:tame2}) suggest the following strategy for construction
Picard curves with small conductor and thereby finding counterexamples. If we
ignore the possibility of exceptional primes, a Picard curve with conductor
$\leq 2^63^6$ must have good reduction outside $S$, where $S$ is one of the
following sets:
\begin{itemize}
\item
  $\{2,3,p\}$, \quad $p\leq 13$,
\item
  $\{3,p\}$, \quad $p\leq 23$.
\end{itemize}
To find all such curves looks challenging but within reach. It should also be
very useful to take into account the local restrictions on the polynomial $f$
imposed by our results on curves with a specific value for $f_p$. On the other
hand, without an effective proof of Theorem \ref{thm:Faltings} (b) for Picard
curves, it is not clear at the moment how one could actually prove that the
curve from Example \ref{exa:smallconductor} (or any other curve we may find)
has minimal conductor.

\end{document}